\theoremstyle{plain}
\newtheorem{theorem}{Theorem}[section]
\newtheorem{corollary}[theorem]{Corollary}
\newtheorem{lemma}[theorem]{Lemma}
\newtheorem{proposition}[theorem]{Proposition}
\theoremstyle{definition}
\newtheorem{definition}{Definition}[section]
\newtheorem{remark}{Remark}[section]
\newtheorem{example}{Example}[section]
\numberwithin{equation}{section}
\newcommand{\mR}{\mathbb{R}}
\newcommand{\bfx}{\mathbf{x}}
\newcommand{\bfc}{\mathbf{c}}
\newcommand{\bff}{\mathbf{f}}
\newcommand{\bfq}{\mathbf{q}}
\newcommand{\bfe}{\mathbf{e}}
\newcommand{\Hu}{\mathcal{H}u}
\newcommand{\M}{\mathcal{M}}
\newcommand{\dd}{\mathrm{\,d}}
\newcommand{\DD}{\mathrm{D}}
\newcommand{\oll}{\overline{\lambda}}
\DeclareMathOperator{\tr}{tr}
\DeclareMathOperator{\diag}{diag}
\DeclareMathOperator{\spa}{span}
\begin{document}

\author{Karl K. Brustad\\ {\small Aalto University}}
\title{Total derivatives of eigenvalues and eigenprojections of symmetric matrices}

\maketitle

\begin{abstract}
Conditions for existence and formulas for the first- and second order total derivatives of the eigenvalues, and the first order total derivatives of the eigenprojections of smooth matrix-valued functions $H\colon\Omega\to S(m)$ are given. The eigenvalues and eigenprojections are considered as functions in the same domain $\Omega\subseteq\mR^n$.
\end{abstract}

\section{Introduction}
Let $\Omega\subseteq\mathbb{R}^n$ be a domain and assume that $H\colon\Omega\to S(m)$ is a continuously differentiable function taking values in the space $S(m)$ of symmetric $m\times m$ matrices.
Under what conditions are the eigenvalues and the eigenprojections of $H$ differentiable, and what are their total derivatives? The eigenprojection $P_j(x)$, corresponding to the eigenvalue $\lambda_j(x)$ of $H$ at $x\in\Omega$, is the unique symmetric $m\times m$ projection matrix, i.e. $P_j^T(x) = P_j(x) = P_j^2(x)$, satisfying
\[H(x)P_j(x) = \lambda_j(x)P_j(x)\]
with rank, or \emph{dimension}, equal to the multiplicity of the eigenvalue.

The Hessian matrix $\Hu$ of a function $u\in C^3(\Omega)$ is a motivating special case.
Then $m=n$ and, as a standard example -- showing that smooth matrices need not have differentiable eigenvalues -- one may consider the real part of the analytic function $z^3$ in the plane $\mathbb{C}$: If we set $u(x,y) := \frac{1}{6}(x^3 - 3xy^2)$, then
\[\Hu(x,y) = \begin{pmatrix}
x & -y\\ -y & -x
\end{pmatrix}\]
with eigenvalues
\[\lambda_1(x,y) = -\sqrt{x^2+y^2}\qquad\text{and}\qquad \lambda_2(x,y) = \sqrt{x^2+y^2}.\]
We see that the problem occurs at the origin where the eigenvalues "cross". This is a well known phenomenon.
The corresponding eigenprojections are not even continuous since $\tr P_1 = \tr P_2 = 1$ away from the origin while $P_1(0,0) = P_2(0,0) = I$ as $\Hu(0,0) = 0 = 0\cdot I$.

Perturbations of eigenvalues and eigenvectors of symmetric matrix-valued functions have been studied in various settings. It is shown in \cite{MR1868352} that the $j$'th eigenvalue of $H(t) =  H_0 + tH_1 + \tfrac{1}{2}t^2 H_2$ always has first- and second order one-sided derivatives. This work is partly based on \cite{MR1320701} and is developed further in \cite{MR3089437}. Our formulas for the derivatives of $\lambda_j$ have counterparts in these papers, although the setting is not exactly the same.
The total projection for the \emph{$\lambda$-group} -- i.e. the sum of projections corresponding to neighbouring eigenvalues -- is analyzed in \cite{MR1335452}. The expression \eqref{eq:eigenproj_dir_thm} for the derivative of $P_j$ may be compared with the one found in (Theorem 5.4 \cite{MR1335452}). The book by Kato is a standard reference for perturbating matrices depending on a single real or complex parameter. Unfortunately, many of the results therein do not generalize if the matrix depends on several variables.
We also mention the papers \cite{MR1871318} and \cite{MR1238911} where, respectively, \emph{spectral functions} and solutions to nonlinear eigenvalue-eigenvector problems are differentiated.

We shall consider the eigenvalues and eigenprojections as functions in $\Omega\subseteq\mR^n$.
The total derivative of an eigenvalue $\lambda_j$ is, if it exists, the gradient $\nabla\lambda_j$.
For the matrix-valued eigenprojections $P_j$, the total derivative is a mapping $\mathbb{R}^m\times\mathbb{R}^m\times\mathbb{R}^n\times\Omega\to\mathbb{R}$ linear in the three first arguments. That is, a third order tensor. In order to simplify the notation and minimize the use of indexes, we introduce two different \emph{first order matrix valued} tensors representing the derivative of matrix functions. The (double-sided) directional derivatives are also studied. In contrast to the one-sided limits, they do not always exist as our example clearly shows.

Our main results Theorem \ref{thm:projectionderivative}, Theorem \ref{thm:projection_directional_derivative}, and Theorem \ref{thm:lambda_reg} give
explicit expressions for the derivatives of $\lambda_j$ and $P_j$ in terms of $H$ and its derivatives.
As a little surprise, it turns out that an eigenprojection is continuous only if it is differentiable, and it has constant dimension only if it is directionally differentiable.  Moreover, if $H$ is $C^2$ and the eigenprojection is continuous, then the corresponding eigenvalue has a differentiable gradient.
We have not been able to find these observations in the litterature.
A key ingredient in proving the theorems is Lemma \ref{lem:derofmin} (Lemma 5.2 \cite{MR3155251}). It does not seem to have been used in the other aforementioned papers.

\section{Preliminaries}\label{sec:prelim}

The matrix norm used throughout the paper is $\lVert X\rVert := \sqrt{\tr(X^TX)}$.
Even though we treat $\mR^m$ as $\mR^{m\times 1}$ algebraically, the vector norm is denoted by $\lvert y\rvert :=\sqrt{y^Ty}$.  If $\bff\colon \Omega\to\mR^m$ is a differentiable function, its \emph{Jacobian matrix} is the mapping $\nabla\bff\colon\Omega\to\mR^{m\times n}$ satisfying
\[\bff(x+y) = \bff(x) + \nabla\bff(x)y + o(\lvert y\rvert)\]
as $y\to 0$. In particular, gradients are row vectors.

\subsection{Matrix derivatives}

\begin{definition}
Let $F\colon \Omega\to\mathbb{R}^{m\times k}$ be given. The \emph{directional derivative} $\DD F\colon\mR^n\times\Omega\to\mR^{m\times k}$ of $F$ is defined by
\begin{equation}\label{def2}
\DD_e F(x) := \lim_{h\to 0}\frac{F(x+he) - F(x)}{h}
\end{equation}
whenever the limit exists.
When $F$ is differentiable, the \emph{Jacobian derivative} $\nabla F\colon\mR^k\times\Omega\to\mR^{m\times n}$ of $F$ is defined by
\begin{equation}\label{def1}
\nabla_q F(x) := \nabla[Fq](x).
\end{equation}
That is, the Jacobian matrix of the vector valued function $x\mapsto F(x)q$.
\end{definition}

It is possible to define the Jacobian in terms of combinations of partial derivatives, but we shall reserve the notation $\nabla$ and $\nabla_q$ for functions that are assumed to be differentiable.

Clearly, $\DD_e F^T = (\DD_e F)^T$ and any symmetry of a square matrix $F$ is therefore preserved.
If $F$ is assumed to be differentiable, the directional derivative satisfies
\[F(x+y) = F(x) + \DD_y F(x) + o(\lvert y\rvert)\qquad \text{as $y\to 0$,}\]
and one can check that
\begin{equation}\label{sym1}
\DD_e F(x)q = \nabla_q F(x)e\qquad \forall e\in\mathbb{R}^n,\, q\in\mathbb{R}^k.
\end{equation}
Note that the dimensions match and that the above is an equality in $\mathbb{R}^m$.

If $\bfq\colon\Omega\to\mathbb{R}^k$ and $\bfe\colon [a,b]\to\mR^n$ are functions, we write
\[\nabla_{\bfq(x)} F(x) := \nabla_q F(x)\Big\vert_{q = \bfq(x)}\qquad\text{and}\qquad \DD_{\bfe(t)} F(x) := \DD_e F(x)\Big\vert_{e = \bfe(t)}.\]
Thus if $\bfq$ is differentiable, the product rule yields
\[\nabla[F\bfq](x) = F(x)\nabla\bfq(x) + \nabla_{\bfq(x)} F(x),\]
and if $\bfc\colon [a,b]\to\Omega$ is a differentiable curve, we get, by the chain rule and by using \eqref{sym1}, that
\begin{equation}\label{chainrule}
\frac{\dd}{\dd t}F(\bfc(t)) = \DD_{\bfc'(t)}F(\bfc(t)).
\end{equation}
Moreover, for vectors $p\in\mR^m$ and $q\in\mR^k$ we have
\begin{equation}\label{sym2}
p^T\nabla_q F(x) = q^T\nabla_p F^T(x).
\end{equation}
Note again that the dimensions match and that \eqref{sym2} is an equality in $\mR^{1\times n}$. Indeed, since $F^T(x)$ is a $k\times m$ matrix, the Jacobian $\nabla_p F^T = \nabla[F^Tp]$ is of dimension $k\times n$.

If the matrix function $F$ is a Hessian $\Hu\colon \Omega\to S(n)$ of $u\in C^3(\Omega)$, then the directional and the Jacobian derivatives coincide and are again symmetric, that is,
\begin{equation}\label{hessprop}
\DD_\xi\Hu(x) = \nabla_\xi\Hu(x)\in S(n)\qquad\forall\xi\in\mR^n.
\end{equation}
It is, in fact, the Hessian matrix of the $C^2$ function $x\mapsto \nabla u(x)\xi$ in $\Omega$. Combined with \eqref{sym2}, this means that
\begin{equation}\label{eq:Hess_sym}
\xi_i^T\DD_{\xi_j}\Hu\,\xi_k = \xi_{\pi(i)}^T\DD_{\xi_{\pi(j)}}\Hu\,\xi_{\pi(k)}
\end{equation}
in $\Omega$ for all $\xi_i,\xi_j,\xi_k\in\mR^n$ and all permutations $\pi$ on $\{i,j,k\}$.
In particular, $e_i^T\DD_{e_j}\Hu\,e_k = \frac{\partial^3 u}{\partial x_i \partial x_j \partial x_k}$.

\subsection{Symmetric matrices}

The \emph{spectral theorem} states that every symmetric $m\times m$ matrix can be diagonalized.
For any $X\in S(m)$ there exists an orthogonal $m\times m$ matrix $U$ such that $U^TXU = \diag(\lambda_1,\dots,\lambda_m)$ where $\lambda_1\leq\cdots\leq\lambda_m$ are the eigenvalues of $X$.
Moreover, the \emph{eigenspaces} $E_j := \{\xi\in\mR^m\,\vert\, X\xi = \lambda_j\xi\}$ are $d_j$-dimensional subspaces of $\mR^m$ where $d_j$ is the multiplicity of $\lambda_j$.
The spaces $E_j$ and $E_k$ are orthogonal whenever $\lambda_j\neq \lambda_k$. Obviously, $E_j=E_k$ if $\lambda_j= \lambda_k$.
By writing $U = (\xi_1,\dots,\xi_m)$, we get that
\begin{equation}\label{eq:Xdiag}
X = U\diag(\lambda_1,\dots,\lambda_m)U^T = \sum_{i=1}^m \lambda_i\xi_i\xi_i^T
\end{equation}
and that $E_j = \spa\{\xi_i\,\vert\, \lambda_i = \lambda_j\}$.

The class of symmetric $m\times m$ projection matrices is denoted by
\[Pr(m) := \{P\in S(m)\,\vert\, PP = P\}.\]
Since their eigenvalues are either 0 or 1, these matrices are on the form
\begin{equation}\label{eq:projsplit}
P = \sum_{i=1}^d \xi_i\xi_i^T = QQ^T,\qquad Q := (\xi_1,\dots,\xi_d)\in\mR^{m\times d},
\end{equation}
for some $d = 0,1,\dots, m$ (with the convention that empty sums are zero) and where $Q^TQ = I_d$. The set $\{\xi_1,\dots,\xi_d\}$ is an orthonormal basis for the $d$-dimensional subspace
\[P(\mR^m) := \{P\xi\,\vert\, \xi\in\mR^m\}\subseteq \mR^m.\]
Conversely, given a subspace $E$ of $\mR^m$, there is a unique symmetric projection $P$ such that $E = P(\mR^m)$.
Indeed, if $P(\mR^m) = E = R(\mR^m)$, then $P\xi,R\xi\in E$ for every $\xi\in\mR^m$. Thus $RP\xi = P\xi$ and $PR\xi = R\xi$ and $P = P^T = (RP)^T = PR = R$.
Note therefore that the factorization \eqref{eq:projsplit} is not unique as $P = \sum_{i=1}^d \eta_i\eta_i^T$ for every orthonormal basis $\{\eta_1,\dots,\eta_d\}$ of $P(\mR^m)$.

In the case of the symmetric matrix $X$ it follows that
\[P_j = \sum_{\substack{ i=1\\ \lambda_i = \lambda_j} }^m\xi_i\xi_i^T\]
is the unique eigenprojection corresponding to the $j$'th eigenvalue of $X$, regardless of the choice $U = (\xi_1,\dots,\xi_m)$ of eigenvectors.

If we let $\alpha\colon\{1,\dots,s\}\to \{1,\dots,m\}$ be a re-indexing that picks out all of the $s := \vert\{\lambda_1,\dots,\lambda_m\}\vert$ \emph{distinct} eigenvalues of $X$, we may collect the terms in \eqref{eq:Xdiag} with equal coefficients and write
\begin{equation}\label{eq:X_unrep}
X = \sum_{l=1}^s \lambda_{\alpha(l)}P_{\alpha(l)}.
\end{equation}
Now,
\[P_{\alpha(l)}P_{\alpha(k)} = \delta_{lk}P_{\alpha(l)}\qquad \text{and}\qquad \sum_{l=1}^sP_{\alpha(l)} = \sum_{i=1}^m\xi_i\xi_i^T = I\]
and \eqref{eq:X_unrep} is the unique representation of $X$ in terms of a complete set of eigenprojections and the \emph{unrepeated} eigenvalues.
However, since $\alpha$ depends on $X$ it is often more convenient to represent $X$ in terms of the \emph{repeated} versions $\lambda_i$ and $P_i$ where the indexing goes from 1 to $m$. This is obtained by noticing that
\[\sum_{\substack{ i=1\\ \lambda_i = \lambda_{\alpha(l)}} }^m\frac{\lambda_i}{d_i}P_i = \frac{\lambda_{\alpha(l)}}{d_{\alpha(l)}}P_{\alpha(l)}\sum_{\substack{ i=1\\ \lambda_i = \lambda_{\alpha(l)}} }^m 1= \lambda_{\alpha(l)}P_{\alpha(l)}\]
and thus
\[X = \sum_{l=1}^s \lambda_{\alpha(l)}P_{\alpha(l)} = \sum_{l=1}^s\sum_{\substack{ i=1\\ \lambda_i = \lambda_{\alpha(l)}} }^m\frac{\lambda_i}{d_i}P_i = \sum_{i=1}^m\frac{\lambda_i}{d_i}P_i.\]

In \cite{MR1091716}, the unrepeated eigenprojections are called the \emph{Frobenius covariants} and an explicit formula in terms of $X$ and the eigenvalues is given. In our notation
\begin{equation}\label{eq:frobenius}
P_{\alpha(k)} = \prod_{\substack{l=1\\ l\neq k}}^{s}\frac{X - \lambda_{\alpha(l)}I}{\lambda_{\alpha(k)} - \lambda_{\alpha(l)}}
\end{equation}
with the convention that an empty product is the identity. The formula can also be verified directly from \eqref{eq:X_unrep}.

\section{Differentiation of the eigenprojections}

By the above discussion, we can write $H(x)$ as
\[H(x) = \sum_{l=1}^{s} \lambda_{\alpha(l)}(x)P_{\alpha(l)}(x) = \sum_{i=1}^m \frac{\lambda_i(x)}{d_i(x)}P_i(x)\]
where either representation is unique in its specific sense.
Here, $d_i(x) := \tr P_i(x)$ and \(s = s(x)\in \{1,\dots,m\}\) is the number of \emph{different} eigenvalues of \(H\) at \(x\).

For \(1\leq j\leq m\), let \(A_j\colon\Omega\to S(m)\) be given by
\[A_j(x) := \sum_{\substack{l=1\\ \lambda_{\alpha(l)}(x)\neq\lambda_j(x)}}^{s}\frac{P_{\alpha(l)}(x)}{\lambda_j(x) - \lambda_{\alpha(l)}(x)} = \sum_{ \substack{ i=1\\ \lambda_i(x) \neq \lambda_j(x)} }^m \frac{P_i(x)/d_i(x)}{\lambda_j(x) - \lambda_i(x)}.\]
It commutes with $H$, $A_jP_j = P_j A_j = 0$ and satisfies
\begin{align*}
A_j(\lambda_j I - H)
	&= \sum_{\substack{l=1\\ \lambda_{\alpha(l)}\neq\lambda_j}}^{s}\frac{P_{\alpha(l)}}{\lambda_j - \lambda_{\alpha(l)}} \sum_{k=1}^{s}(\lambda_j - \lambda_{\alpha(k)})P_{\alpha(k)}\\
	&= \sum_{\substack{l=1\\ \lambda_{\alpha(l)}\neq\lambda_j}}^{s}P_{\alpha(l)}\\
	&= I - P_j.
\end{align*}
It is therefore the \emph{pseudoinverse} of the singular matrix $\lambda_j I - H$.

\begin{theorem}[Total derivative of eigenprojections]\label{thm:projectionderivative}
Let \(H\in C^1(\Omega,S(m))\) with repeated eigenvalues and eigenprojections $\lambda_i(x)$ and $P_i(x)$, $i=1,\dots,m$. Let $j\in\{1,2\dots,m\}$ and assume that either
\begin{equation}\label{eq:projectionderivative_condition}
\text{$P_j$ is continuous}\quad\textbf{or}\quad \text{$\lambda_j$ is differentiable and $\tr P_j$ is constant}
\end{equation}
in $\Omega$.
Then \(P_j\) is differentiable in \(\Omega\) with total derivatives
\[\nabla_q P_j(x) = P_j(x)\nabla_{A_j(x)q}H(x) + A_j(x)\nabla_{P_j(x)q}H(x)\]
and
\[\DD_e P_j(x) = P_j(x)\DD_e H(x)A_j(x) + A_j(x)\DD_e H(x)P_j(x)\]
for all $q\in\mR^m$ and all $e\in\mR^n$.
\end{theorem}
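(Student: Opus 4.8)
The plan is to establish the statement in two stages: first that $P_j$ is in fact of class $C^1$ on $\Omega$, and only then, with that regularity available, to read off both formulas by differentiating the identities that define $P_j$ together with the pseudoinverse relations $A_j(\lambda_jI-H)=I-P_j$ and $A_jP_j=P_jA_j=0$ already recorded above. A preliminary observation is that under either alternative in \eqref{eq:projectionderivative_condition} the dimension $d_j:=\tr P_j$ is constant on $\Omega$: under the second alternative this is assumed, while under the first the map $x\mapsto\tr P_j(x)$ is continuous on the connected set $\Omega$ with values in $\{0,1,\dots,m\}$, hence constant. Thus $\lambda_j(x)$ has multiplicity exactly $d_j$ at every $x\in\Omega$.

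For differentiability I would argue locally. Fix $x_0\in\Omega$, put $c:=\lambda_j(x_0)$, pick $r>0$ smaller than the gap from $c$ to the remaining eigenvalues of $H(x_0)$, and set $\Gamma:=\{z\in\mathbb C:\lvert z-c\rvert=r\}$. Continuity of the spectrum yields a neighbourhood $U\ni x_0$ on which $\Gamma$ misses the spectrum of $H(x)$ and encircles exactly $d_j$ of its eigenvalues (with multiplicity) — precisely those tending to $c$. The total eigenprojection of that group,
\[\Pi(x):=\frac{1}{2\pi i}\oint_\Gamma\bigl(zI-H(x)\bigr)^{-1}\dd z,\]
inherits the regularity of $H$, so $\Pi\in C^1(U,S(m))$; moreover $\Pi(x)$ is the orthogonal projection onto the sum of the eigenspaces of the enclosed eigenvalues, so it is a symmetric projection with $\tr\Pi\equiv d_j$ and $\Pi(x_0)=P_j(x_0)$. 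Since $\lambda_j(x)$ is one of those eigenvalues, the range of $P_j(x)$ lies inside that of $\Pi(x)$, i.e. $P_j\Pi=\Pi P_j=P_j$ on $U$; then $\Pi-P_j$ is a symmetric projection of trace $d_j-d_j=0$, so $P_j\equiv\Pi$ on $U$. Hence $P_j\in C^1(\Omega)$, and therefore $\lambda_j=\tr(HP_j)/d_j\in C^1(\Omega)$ as well. (The contour integral is the quickest route I see, but it cuts against the real-variable philosophy of the paper; a substitute tailored to that setting would use the constancy of $d_j$ to pin down the local eigenvalue-group structure near a crossing and then invoke Lemma \ref{lem:derofmin}, and under the second alternative the needed differentiability of $\lambda_j$ is in any case given outright.)

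With $H,P_j,\lambda_j$ all $C^1$, the formulas follow from the product and chain rules of Section \ref{sec:prelim}. Abbreviate $D_e:=\DD_eP_j(x)$. Differentiating $P_j=P_j^2$ along $e$ gives $D_e=P_jD_e+D_eP_j$; sandwiching this between $P_j$'s and between $(I-P_j)$'s shows $P_jD_eP_j=0$ and $(I-P_j)D_e(I-P_j)=0$, i.e. $D_e=P_jD_e(I-P_j)+(I-P_j)D_eP_j$. Differentiating $HP_j=\lambda_jP_j$ gives $(\lambda_jI-H)D_e=\bigl(\DD_eH-(\DD_e\lambda_j)I\bigr)P_j$; left-multiplying by $A_j$ and using $A_j(\lambda_jI-H)=I-P_j$ and $A_jP_j=0$ collapses this to $(I-P_j)D_eP_j=A_j\,\DD_eH\,P_j$. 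Transposing the eigen-relation to $P_jH=\lambda_jP_j$ and running the same computation on the other side (right-multiplying by $A_j$, using $(\lambda_jI-H)A_j=I-P_j$ and $P_jA_j=0$) gives $P_jD_e(I-P_j)=P_j\,\DD_eH\,A_j$. Adding the two off-diagonal blocks yields
\[\DD_eP_j(x)=P_j(x)\,\DD_eH(x)\,A_j(x)+A_j(x)\,\DD_eH(x)\,P_j(x),\]
the $\DD_e\lambda_j$ contributions being annihilated by $A_jP_j=P_jA_j=0$. The Jacobian form then drops out of \eqref{sym1} applied to $P_j$ and to $H$: for every $e\in\mR^n$,
\[\nabla_qP_j(x)e=\DD_eP_j(x)q=P_j\,\DD_eH(x)(A_jq)+A_j\,\DD_eH(x)(P_jq)=\bigl(P_j\nabla_{A_jq}H(x)+A_j\nabla_{P_jq}H(x)\bigr)e,\]
so $\nabla_qP_j(x)=P_j(x)\nabla_{A_j(x)q}H(x)+A_j(x)\nabla_{P_j(x)q}H(x)$; alternatively one may repeat the block computation directly on the vector function $x\mapsto P_j(x)q$.

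I expect the only genuine difficulty to be the differentiability part. Differentiating the identities is routine matrix bookkeeping once $P_j$ is known to be smooth, and it is exactly the $z^3$-type examples from the introduction that make the passage from the hypotheses in \eqref{eq:projectionderivative_condition} to ``$P_j\in C^1$'' the crux — whether one routes it through the Riesz projection above or, in keeping with the paper, through the combinatorics of merging eigenvalues and Lemma \ref{lem:derofmin}.
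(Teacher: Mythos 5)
Your proof is correct, but it takes a genuinely different route from the paper's. The paper establishes differentiability and the formula in one stroke by a direct real-variable expansion: from $(H(y)-\lambda_j(y)I)P_j(y)=0$ it extracts $(I-P_j)P_j(y)=A_j\DD_yH\,P_j(y)+o(|y|)$, and then uses a factorization $P_j(y)=Q(y)Q^T(y)$ together with the constant-dimension hypothesis to show $P_jP_j(y)P_j=P_j+O(|y|^2)$; these combine into $P_j(y)=P_j+P_j\DD_yH\,A_j+A_j\DD_yH\,P_j+o(|y|)$, and the ``$P_j$ continuous'' alternative is reduced to the other one via Proposition \ref{prop:equivalentprop}. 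You instead split the task: you first get $P_j\in C^1$ outright from the Riesz--Dunford projection $\Pi(x)=\tfrac{1}{2\pi i}\oint_\Gamma(zI-H(x))^{-1}\dd z$, and only then read off both formulas by differentiating $P_j^2=P_j$ and $HP_j=\lambda_jP_j$ against the pseudoinverse identities, via the block decomposition $\DD_eP_j=P_j\DD_eP_j(I-P_j)+(I-P_j)\DD_eP_jP_j$ which the paper never uses. Your identification $P_j\equiv\Pi$ (equal traces plus range inclusion) is sound, and it is worth noting that it consumes only the constancy of $d_j=\tr P_j$, never the differentiability of $\lambda_j$; so your argument actually shows that constant dimension alone already forces $P_j$ (hence $\lambda_j$) to be $C^1$ --- consistent with, but sharper than, Proposition \ref{prop:equivalentprop} and Theorem \ref{thm:projection_directional_derivative}, parts of whose hypotheses it renders redundant. (The structural reason is supplied by the paper's own Lemma \ref{lem:semicont}: constant $d_j$ pins down $j_*$ and $j^*$, hence a spectral gap isolates the $\lambda_j$-group at every point, which is exactly what your contour needs.) What the paper's route buys is a self-contained, purely real-variable argument with no resolvent calculus; what yours buys is the cleaner separation of regularity from algebra and a stronger regularity conclusion, at the price of importing the complex-analytic machinery the author is visibly avoiding. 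The algebraic second half of your proof is routine and correct as you say.
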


An immediate observation is that
\begin{equation}\label{eq:nablaPH}
\tr\left(\DD_e P_j H\right) = 0 = \tr\left(\DD_e P_j\right).
\end{equation}

\begin{proof}
We shall prove the claim under the latter assumption in \eqref{eq:projectionderivative_condition}. The proof of the theorem is then completed by Proposition \ref{prop:equivalentprop} which says that these two conditions are equivalent for $C^1$ matrices.
It is worth mentioning that the Frobenius formula \eqref{eq:frobenius} is not directly applicable since the indexing will depend on $x\in\Omega$.

We dropp the subscripts and write $P := P_j$, $d := \tr P$, $A := A_j$, and $\lambda := \lambda_j$. 
Fix \(x\in\Omega\), which we may assume to be the origin, and let $y\in\mR^n$ be small. By the differentiability assumptions 
\begin{align*}
0 &= \big( H(y) - \lambda(y)I\big)P(y)\\
&= \big( H - \lambda I + \DD_y H - \nabla\lambda y\cdot I\big)P(y) + o(|y|)
\end{align*}
as \(y\to 0\). Functions written without an argument are to be understood as evaluated at \(x=0\).
Multiplying from the left with \(A\) gives
\begin{equation}
(I-P)P(y) = A\left(\DD_y H - \nabla\lambda y\cdot I\right)P(y) + o(|y|) = O(|y|).
\label{eq:bigO}
\end{equation}
Since $A = A(I-P)$, it follows from \eqref{eq:bigO} that also
\begin{equation}\label{eq:(I-P)P_est}
\begin{aligned}
(I-P)P(y) &= A\DD_y H P(y) - \nabla\lambda y\cdot A(I-P)P(y) + o(|y|)\\
&= A\DD_y H P(y)+ o(|y|).
\end{aligned}
\end{equation}

It remains to find an estimate for $PP(y)$.
According to \eqref{eq:projsplit}, we can for each $y$ split $P(y)$ into a product $Q(y)Q^T(y)$. Although $Q(y)\in\mR^{m\times d(y)}$ is not unique, it is obviously bounded. Define
\[R(y) := Q^TQ(y) \in\mR^{d\times d(y)}\]
and write
\begin{equation}
\begin{aligned}\label{eq:RTR_est}
I_{d(y)} &= Q^T(y)Q(y)\\
&= Q^T(y)PQ(y) + Q^T(y)(I-P)Q(y)\\
&= R^T(y)R(y) + O(\vert y\vert^2)
\end{aligned}
\end{equation}
where the estimate on the last line is due to \eqref{eq:bigO} after multiplying on the right by $Q(y)$, and by the fact that $I-P = (I-P)^2$.
This means that the eigenvalues of \(R^T(y)R(y)\in S(d(y))\) are all in the range \(1 + O(|y|^2)\). It is therefore invertible, and the inverse is bounded as the eigenvalues of $(R^T(y)R(y))^{-1}$ are again in the range $(1 + O(|y|^2))^{-1} = 1 + O(|y|^2)$. Hence
\begin{equation}\label{eq:RTRinv_est}
(R^T(y)R(y))^{-1} = I_{d(y)} + O(\vert y\vert^2).
\end{equation}

We now use the assumption that $P(y)$ has constant dimension $d(y)\equiv d$. It implies that $R(y)$ is square and, by taking the determinant of \eqref{eq:RTR_est}, we see that $R(y)$ is invertible as well. The left-hand side of \eqref{eq:RTRinv_est} may therefore be written as
$R^{-1}(y) (R^T(y))^{-1}$, and multiplication from the left with $QR(y)$ and from the right with $R^T(y)Q^T$ then yields
\begin{equation}\label{eq:QTPQ_est}
P = QR(y)R^T(y)Q^T + O(\vert y\vert^2) = PP(y)P + O(\vert y\vert^2).
\end{equation}
Combining this with the transposed of \eqref{eq:(I-P)P_est} gives
\[PP(y) = PP(y)(P + I-P)\\
= P + PP(y)\DD_y H A + o(\vert y\vert)\]
and thus
\begin{align*}
P(y) &= PP(y) + (I-P)P(y)\\
     &= P + PP(y)\DD_y H A + A\DD_y H P(y)+ o(|y|).
\end{align*}
Since the whole expression is $P + O(|y|)$, the factor $P(y)\DD_y H$ can be replaced with $(P + O(|y|))\DD_y H = P\DD_y H + o(|y|)$ and we finally conclude that
\[P(y) = P + P\DD_y H A + A\DD_y H P+ o(|y|).\]

In order to get the formula for the directional derivative $\DD_e P$, substitute $y$ with $he$ where $e\in\mR^n$ and let the number $h$ go to zero. As for the Jacobian derivative, the symmetry \eqref{eq:Hess_sym} implies that
\begin{align*}
P(y)q - P(0)q &= \left(P\DD_y H A + A\DD_y H P \right)q+ o(|y|)\\
           &= \left(P\nabla_{Aq} H + A\nabla_{Pq} H \right)y+ o(|y|)
\end{align*}
and $\nabla_q P = P\nabla_{Aq} H + A\nabla_{Pq} H$ being the Jacobian matrix of $x\mapsto P(x)q$ at $x=0$.
\end{proof}

When inspecting the above proof, it becomes clear that the same formula for the directional derivative $\DD_e P_j$ would have been produced if $y$ had been replaced with $he$ all the way from the begining. But then the arguments work also for the weaker assumption of constant dimension and \textit{directional} differentibility of the eigenvalue. Since Proposition \ref{prop:diffoncurves} shows that the former of these two conditions implies the latter, the theorem below follows.

\begin{theorem}[Directional derivative of eigenprojections]\label{thm:projection_directional_derivative}
Let \(H\in C^1(\Omega,S(m))\) with repeated eigenvalues and eigenprojections $\lambda_i(x)$ and $P_i(x)$, $i=1,\dots,m$. Let $j\in\{1,2\dots,m\}$ and assume that $\tr P_j$ is constant in $\Omega$. Then \(P_j\) is directionally differentiable in \(\Omega\) with 
\begin{equation}\label{eq:eigenproj_dir_thm}
\DD_e P_j(x) = P_j(x)\DD_e H(x)A_j(x) + A_j(x)\DD_e H(x)P_j(x)
\end{equation}
for all $e\in\mR^n$.
\end{theorem}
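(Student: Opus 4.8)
The plan is to reduce Theorem~\ref{thm:projection_directional_derivative} to a one-dimensional specialization of the argument already carried out for Theorem~\ref{thm:projectionderivative}. Indeed, as the remark preceding the statement points out, if one replaces the generic increment $y\in\mR^n$ by $he$ with $e\in\mR^n$ fixed and $h\in\mR$ small, the entire chain of estimates \eqref{eq:bigO}--\eqref{eq:QTPQ_est} goes through verbatim, \emph{except} that the step invoking differentiability of $\lambda_j$ must be re-examined, and the symmetry identity \eqref{eq:Hess_sym} is no longer needed since we only want the directional formula. So the real content is: (i) check that constant $\tr P_j$ alone suffices to run the argument along the line $h\mapsto x+he$, and (ii) supply the missing ingredient that $\lambda_j$ is directionally differentiable along that line, which is exactly what Proposition~\ref{prop:diffoncurves} is cited for.

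First I would restrict to the curve $\bfc(h) := x + he$ and set $g(h) := \lambda_j(\bfc(h))$, $\Pi(h) := P_j(\bfc(h))$. By Proposition~\ref{prop:diffoncurves}, constancy of $\tr P_j$ forces $\lambda_j$ to be differentiable when restricted to such a line (equivalently, the one-variable matrix function $h\mapsto H(x+he)$ has a differentiable $j$-th eigenvalue near $h=0$), so $g'(0)$ exists. Then I would redo the proof of Theorem~\ref{thm:projectionderivative} with $y$ replaced by $he$ throughout: from $0 = (H(x+he)-g(h)I)\Pi(h)$ and the $C^1$-expansion $H(x+he) = H(x) + h\,\DD_eH(x) + o(h)$, $g(h) = g(0) + h g'(0) + o(h)$, left-multiplication by $A := A_j(x)$ gives $(I-P)\Pi(h) = h\,A\,\DD_eH(x)\,\Pi(h) + o(h)$, and the term involving $g'(0)$ drops because $A(I-P)=A$ acts trivially there exactly as in \eqref{eq:(I-P)P_est}. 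The factorization $\Pi(h) = Q(h)Q(h)^T$, the matrix $R(h) := Q^TQ(h)$, and the estimates \eqref{eq:RTR_est}--\eqref{eq:QTPQ_est} are all purely algebraic once we know $d(x+he)\equiv d$, which is immediate from constancy of $\tr P_j$; they yield $P = P\Pi(h)P + O(h^2)$. Combining with the transpose of the $(I-P)\Pi(h)$ estimate and the usual bootstrap $\Pi(h)\,\DD_eH(x) = P\,\DD_eH(x) + o(1)$ produces
\[
\Pi(h) = P + h\big(P\,\DD_eH(x)\,A + A\,\DD_eH(x)\,P\big) + o(h),
\]
and dividing by $h$ and letting $h\to 0$ gives \eqref{eq:eigenproj_dir_thm}.

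The main obstacle, and the only place where genuine care is needed, is the passage from the $n$-variable argument to the one-variable one at the two spots where the original proof used more than constancy of dimension: the appeal to differentiability of $\lambda_j$ in \eqref{eq:(I-P)P_est} and the appeal to \eqref{eq:Hess_sym} at the very end. The first is resolved cleanly by Proposition~\ref{prop:diffoncurves}, but I would want to state explicitly that its hypotheses are met --- we only need $\lambda_j\circ\bfc$ differentiable at a single point, and the proposition delivers exactly that from the constant-trace hypothesis. The second is a non-issue here because we are only claiming the directional derivative, not the Jacobian derivative, so the symmetry identity is simply never invoked: the computation stops at the displayed expansion of $\Pi(h)$ in powers of $h$. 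I would therefore keep the write-up short, quoting Proposition~\ref{prop:diffoncurves} and then saying ``the argument of Theorem~\ref{thm:projectionderivative} applies mutatis mutandis with $y$ replaced by $he$,'' since reproducing every estimate would be redundant.
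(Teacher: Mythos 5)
Your proposal is correct and is essentially the paper's own argument: the paper likewise obtains Theorem~\ref{thm:projection_directional_derivative} by observing that the proof of Theorem~\ref{thm:projectionderivative} goes through with $y$ replaced by $he$, that constancy of $\tr P_j$ supplies the constant dimension $d(x+he)\equiv d$ needed for the algebraic estimates, and that Proposition~\ref{prop:diffoncurves} furnishes the directional differentiability of $\lambda_j$ along the line $h\mapsto x+he$. You also correctly note that the symmetry identity used for the Jacobian derivative is not needed for the directional formula.
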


The following counterexample settles the question whether differentiability of eigenvalues implies constant dimension of the eigenprojections.

\begin{example}[A Hessian matrix with crossing differentiable eigenvalues]
Let $u\colon\mR^2\to\mR$ be the real part of $\frac{1}{12}(x+iy)^4$. That is,
\[u(x,y) = \frac{x^4 - 6x^2y^2 + y^4}{12}.\]
Then
\[\Hu(x,y) = 
\begin{pmatrix}
x^2 - y^2 & - 2xy\\ -2xy & y^2 - x^2
\end{pmatrix}\]
with eigenvalues
\begin{align*}
\lambda_\pm(x,y) &= \frac{-\tr\Hu(x,y)}{2} \pm\frac{1}{2}\sqrt{\tr^2\Hu(x,y) - 4\det\Hu(x,y)}\\
            &= \pm\sqrt{ -\det\Hu(x,y)}\\
            &= \pm (x^2+y^2)
\end{align*}
that meet at the origin while still being differentiable.

One may check that $(y,x)^T$ is an eigenvector corresponding to the smallest eigenvalue. We therefore have that
\[P_1(x,y) = \frac{1}{\lvert (y,x)\rvert^2}\begin{pmatrix}
y\\ x
\end{pmatrix}(y,x) = \frac{1}{x^2+y^2}\begin{pmatrix}
y^2 & xy\\ xy & x^2
\end{pmatrix}\]
when $x^2+y^2\neq 0$.
Likewise,
\[P_2(x,y) = \frac{1}{\lvert (x,-y)\rvert^2}\begin{pmatrix}
x\\ -y
\end{pmatrix}(x,-y) = \frac{1}{x^2+y^2}\begin{pmatrix}
x^2 & -xy\\ -xy & y^2
\end{pmatrix}.\]
Observe that $\Hu = \lambda_1 P_1 + \lambda_2 P_2$ and that $P_1P_2 = 0$, and $P_1 + P_2 = I$ as it should. At the origin, 
$0 = \Hu = \lambda_{\alpha(1)}P_{\alpha(1)} = \frac{\lambda_1}{2}P_1 + \frac{\lambda_2}{2}P_2$ where, of course,
$\lambda_1=\lambda_2=\lambda_{\alpha(1)} = 0$ and $P_1 = P_2 = P_{\alpha(1)} = I$.
\end{example}

\section{Differentiation of the eigenvalues}

We now set out to find the conditions that makes the eigenvalues of $H\in C^1$ differentiable.
Recalling \eqref{eq:nablaPH}, and since $d_j(x)\lambda_j(x) = \tr\left(P_j(x)H(x)\right)$, the directional derivative $\DD_e\lambda_j(x) = \frac{\dd}{\dd t}\lambda_j(x+te)\rvert_{t=0}$ is formally given by
\[\DD_e\lambda_j = \frac{1}{d_j}\tr\left(P_j\DD_eH + \DD_eP_jH\right) = \frac{1}{d_j}\tr\left(P_j\DD_eH\right).\]
Although the above calculation required the assumption of a differentiable eigenprojection, we shall show that the identity still holds true whenever $P_j(x)$ has merely constant dimension $d_j$ (Proposition \ref{prop:diffoncurves}).
This yields partial derivatives, but in order to get the total derivative $\nabla\lambda_j$ it seems necessary to assume that the eigenprojection also is continuous.
Theorem \ref{thm:lambda_reg} summarizes the exact conditions for existence, and present formulas, for the gradient and the Hessian matrix of the eigenvalues.

Our main starting tools are \emph{Ky Fan's minimum principle} (Theorem 1 \cite{MR0034519}) and a powerful Lemma (Lemma 5.2 \cite{MR3155251}) restated below.

\begin{lemma}[Derivative of a minimum]\label{lem:derofmin}
Let $\M$ be a smooth compact manifold without boundary. Let $[a,b]$ be an interval of the real line and assume we have a function $U\in C^1([a,b]\times \M)$. Define the function $f$ on $[a,b]$ as
\[f(t) := \min_{p\in \M} U(t,p)\]
and let $\Xi = \Xi(t)$ be the set
\[\Xi(t) := \{m\in \M\,\vert\, U(t,m) = f(t)\}.\]
Then $f$ is Lipschitz on $[a,b]$ and the one-sided derivatives exist and are given by
\begin{align*}
\lim_{h\to 0^+}\frac{f(t+h)-f(t)}{h} &= \min_{m\in\Xi(t)}\frac{\partial}{\partial t}U(t,m),\qquad t\in[a,b),\\
\lim_{h\to 0^-}\frac{f(t+h)-f(t)}{h} &= \max_{m\in\Xi(t)}\frac{\partial}{\partial t}U(t,m),\qquad t\in(a,b].
\end{align*}	
\end{lemma}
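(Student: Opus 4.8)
The plan is to reduce the statement to a local analysis near the minimizing set $\Xi(t)$ and then exploit compactness. First I would establish the Lipschitz bound: for $t,t'\in[a,b]$ pick $m\in\Xi(t')$ and $m'\in\Xi(t)$; then $f(t')-f(t)\le U(t',m')-U(t,m')\le L|t'-t|$ where $L:=\sup_{[a,b]\times\M}|\partial_t U|$ (finite by continuity on the compact set $[a,b]\times\M$), and symmetrically $f(t)-f(t')\le L|t'-t|$. So $f$ is $L$-Lipschitz and the one-sided derivatives, once shown to exist, are bounded by $L$.

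For the right derivative at a fixed $t\in[a,b)$, write $D := \min_{m\in\Xi(t)}\partial_t U(t,m)$, which is attained since $\Xi(t)$ is compact (it is a closed subset of the compact $\M$, being the zero set of the continuous function $U(t,\cdot)-f(t)$) and $\partial_t U(t,\cdot)$ is continuous. The lower bound $\liminf_{h\to0^+}\frac{f(t+h)-f(t)}{h}\ge D$ I would get by choosing, for each small $h>0$, a minimizer $m_h\in\Xi(t+h)$, so that
\[
f(t+h)-f(t) \ge U(t+h,m_h) - U(t,m_h) = \int_0^h \partial_t U(t+\sigma,m_h)\dd\sigma.
\]
Any subsequential limit $m_0$ of $m_h$ as $h\to0^+$ lies in $\Xi(t)$ (by continuity of $U$ and of $f$), and uniform continuity of $\partial_t U$ on the compact domain lets me pass to the limit in the integral average, giving $\ge \partial_t U(t,m_0)\ge D$; since this holds along every convergent subsequence, $\liminf \ge D$. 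For the upper bound $\limsup_{h\to0^+}\frac{f(t+h)-f(t)}{h}\le D$, fix a minimizer $m^\ast\in\Xi(t)$ achieving $D$ and use it as a (suboptimal) competitor at time $t+h$:
\[
f(t+h)-f(t) \le U(t+h,m^\ast) - U(t,m^\ast) = \int_0^h \partial_t U(t+\sigma,m^\ast)\dd\sigma,
\]
and dividing by $h$ and letting $h\to0^+$ gives $\limsup\le \partial_t U(t,m^\ast)=D$. Combining the two bounds yields the stated formula for the right derivative. The left derivative at $t\in(a,b]$ is entirely analogous: running the same argument with $h\to0^-$ flips the sign of the integration interval, turning the "$\min$ as competitor" estimate into an upper bound involving $\max_{m\in\Xi(t)}\partial_t U(t,m)$ and the "subsequential minimizer" estimate into the matching lower bound.

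The main obstacle is the $\liminf$ direction: one must argue that the (non-unique, possibly wildly varying) minimizers $m_h\in\Xi(t+h)$ cannot escape a neighbourhood of $\Xi(t)$, and more precisely that every accumulation point lies in $\Xi(t)$. This is where compactness of $\M$ is essential — it guarantees convergent subsequences — together with the joint continuity of $U$ and the already-proved continuity (Lipschitz-ness) of $f$, which together force $U(t,m_0)=\lim U(t+h,m_h)=\lim f(t+h)=f(t)$. The smoothness of $\M$ beyond its being a compact metric space is not really used here; what matters is compactness and that $U\in C^1$ in $t$ uniformly, which again follows from compactness of $[a,b]\times\M$.
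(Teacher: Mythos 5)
Your proposal is correct. Note that the paper does not actually prove this lemma --- it is imported verbatim as Lemma 5.2 of the cited reference \cite{MR3155251} --- so there is no in-paper argument to compare against; judged on its own, your proof is a complete and standard Danskin-type envelope argument. All the key points are in place: the Lipschitz bound from a fixed competitor and the uniform bound on $\partial_t U$ over the compact set $[a,b]\times\M$; the upper bound on the right difference quotient from the suboptimal competitor $m^\ast$ attaining $\min_{m\in\Xi(t)}\partial_t U(t,m)$; the lower bound via minimizers $m_h\in\Xi(t+h)$, compactness of $\M$ to extract convergent subsequences, continuity of $U$ and of $f$ to place every accumulation point in $\Xi(t)$, and uniform continuity of $\partial_t U$ to pass to the limit in the integral average; and the sign flip for $h<0$ that exchanges $\min$ for $\max$ in the left derivative. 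You are also right that smoothness of $\M$ is not needed beyond compactness (plus enough structure to make sense of $C^1$), which is consistent with how the lemma is used in the paper, where $\M$ is the Grassmannian $Pr_k(m)$.
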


\begin{lemma}[Ky Fan's minimum priciple]
Let $X\in S(m)$ with repeated eigenvalues $\lambda_1 \leq \cdots\leq\lambda_m$. Then for every $k = 1,\dots, m$,
\[\sum_{i=1}^k\lambda_i = \min \sum_{i=1}^k \xi_i^T X\xi_i\]
where the minimum is taken over all $k$-tuples $\{\xi_1,\dots,\xi_k\}$ of vectors in $\mR^m$ such that $\xi_i^T\xi_j = \delta_{ij}$.
\end{lemma}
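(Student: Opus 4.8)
The plan is to prove the inequality $\sum_{i=1}^k\xi_i^T X\xi_i\ge\sum_{i=1}^k\lambda_i$ for \emph{every} orthonormal $k$-tuple, and then to note that equality is attained by taking $\xi_1,\dots,\xi_k$ to be orthonormal eigenvectors for $\lambda_1,\dots,\lambda_k$ (so that $\xi_i^TX\xi_i=\lambda_i$). The attainment is immediate; the lower bound is the substantive half, and I would prove it by reducing to a finite-dimensional linear minimization.

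For the lower bound, diagonalize $X = U\diag(\lambda_1,\dots,\lambda_m)U^T$ with $U$ orthogonal as in \eqref{eq:Xdiag}, and set $\eta_i := U^T\xi_i$. Since $U$ is orthogonal, $\{\eta_1,\dots,\eta_k\}$ is again an orthonormal system, and
\[\sum_{i=1}^k\xi_i^TX\xi_i = \sum_{i=1}^k\eta_i^T\diag(\lambda_1,\dots,\lambda_m)\eta_i = \sum_{l=1}^m\lambda_l c_l,\qquad c_l := \sum_{i=1}^k(\eta_i)_l^2 .\]
So it suffices to minimize the linear functional $c\mapsto\sum_l\lambda_l c_l$ over the admissible weight vectors. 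Clearly $c_l\ge 0$ and $\sum_{l=1}^m c_l = \sum_{i=1}^k\lvert\eta_i\rvert^2 = k$. The key point is that $c_l\le 1$ for each $l$: writing $e_l$ for the $l$-th standard basis vector, $c_l = \sum_{i=1}^k(e_l^T\eta_i)^2$ is the squared norm of the orthogonal projection of $e_l$ onto $\spa\{\eta_1,\dots,\eta_k\}$, hence at most $\lvert e_l\rvert^2 = 1$ by Bessel's inequality (equivalently, complete $\{\eta_i\}$ to an orthonormal basis and note $\sum_{i=1}^m(e_l^T\eta_i)^2 = 1$).

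Given the constraints $0\le c_l\le 1$ and $\sum_l c_l = k$, the minimum is $\sum_{i=1}^k\lambda_i$: since $\sum_{l=1}^k(1-c_l) = \sum_{l=k+1}^m c_l$ (both equal $k-\sum_{l=1}^k c_l$) and all terms involved are nonnegative, using $\lambda_l\le\lambda_k$ for $l\le k$ and $\lambda_l\ge\lambda_k$ for $l>k$ gives
\[\sum_{l=1}^m\lambda_l c_l - \sum_{l=1}^k\lambda_l = \sum_{l=k+1}^m\lambda_l c_l - \sum_{l=1}^k\lambda_l(1-c_l) \ge \lambda_k\Bigl(\sum_{l=k+1}^m c_l - \sum_{l=1}^k(1-c_l)\Bigr) = 0 .\]
This establishes the lower bound and, with the attainment, the principle. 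The only step that is not purely mechanical is the bound $c_l\le 1$; once the problem is recast as a linear program over the polytope $\{0\le c_l\le 1,\ \sum_l c_l = k\}$, the remainder is the elementary fact that a linear functional with nondecreasing coefficients is minimized by loading the first $k$ coordinates. One could instead phrase everything via the rank-$k$ symmetric projection $P=\sum_{i=1}^k\xi_i\xi_i^T$ and the identity $\sum_i\xi_i^TX\xi_i = \tr(PX)$, but passing to the weight vector $c$ seems the most transparent route.
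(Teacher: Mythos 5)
Your proof is correct. Note that the paper does not prove this lemma at all --- it is imported verbatim from Fan's original 1949 paper (Theorem 1 of \cite{MR0034519}), so there is no in-paper argument to compare against; what the paper does do is immediately recast the minimum as $\min_{R\in Pr_k(m)}\tr(RX)$ over rank-$k$ symmetric projections, which is the form actually used later. Your argument is the standard one: attainment by the bottom $k$ eigenvectors, plus the lower bound via the weight vector $c$ with $0\le c_l\le 1$ and $\sum_l c_l=k$, followed by the elementary linear-programming step. All three constraint verifications are sound (the only nontrivial one, $c_l\le 1$, is correctly handled by Bessel/Parseval), and the pivot inequality around $\lambda_k$ using $\sum_{l=1}^k(1-c_l)=\sum_{l=k+1}^m c_l$ closes the argument cleanly. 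It is worth observing that your weight vector is exactly the diagonal of the conjugated projection: with $R=\sum_{i=1}^k\xi_i\xi_i^T$ one has $c_l=(U^TRU)_{ll}$ and $\sum_l\lambda_l c_l=\tr(RX)$, so your route and the paper's projection-matrix formulation are two phrasings of the same object; your version makes the extremal structure (a linear functional on the polytope $\{0\le c\le 1,\ \sum_l c_l=k\}$, whose vertices are the $0$--$1$ vectors) explicit, which the bare citation does not.
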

As noted by Fan in the original proof, the right-hand side can be written as the minimum of $\tr\left(Q^TXQ\right)$ over all matrices $Q\in\mR^{m\times k}$ with $Q^TQ = I_k$. In light of \eqref{eq:projsplit}, it is clear that the principle may be restated in terms of projection matrices as
\[\sum_{i=1}^k\lambda_i = \min_{R\in Pr_k(m)}\tr (RX)\]
where
\[Pr_k(m) := \{P\in S(m)\,\vert\, PP = P,\,\tr P = k\},\qquad k=0,1,\dots,m,\]
are the $k$-dimensional subclasses of $Pr(m)$.
Note that $Pr_k(m)$ can be identified with the \emph{Grassmannian manifold} $Gr_k(\mR^m)$ of $k$-dimensional subspaces in $\mR^m$.

Let $\bfx\colon [a,b]\to\Omega$ be a $C^1$ curve.
By setting $\M = Pr_k(m)$ and $U\in C^1([a,b]\times \M)$ as
\[U(t, R) := \tr \left(RH(\bfx(t))\right),\]
the two lemmas imply that the sum of the $k$ smallest eigenvalues of $H\in C^1(\Omega, S(m))$ at $\bfx(t)$,
\[\ell_k(t) := \sum_{i=1}^{k}\lambda_i(\bfx(t)) = \min_{R\in Pr_k(m)}U(t,R),\]
is Lipschitz.
Furthermore, by \eqref{chainrule},
\[\frac{\partial}{\partial t}U(t,R) = \frac{\partial}{\partial t}\tr \left(RH(\bfx(t))\right)
= \tr\left(R \DD_{\bfx'(t)} H(\bfx(t))\right)\]
and the one-sided derivatives of $\ell_k(t)$ are then
\begin{equation}\label{onesidedderivatives}
\begin{aligned}
\lim_{h\to 0^+}\frac{\ell_k(t+h)-\ell_k(t)}{h} &= \min_{R\in\Xi_k(t)}\tr\left(R \DD_{\bfx'(t)} H(\bfx(t))\right),\qquad t\in[a,b).\\
\lim_{h\to 0^-}\frac{\ell_k(t+h)-\ell_k(t)}{h} &= \max_{R\in\Xi_k(t)}\tr\left(R \DD_{\bfx'(t)} H(\bfx(t))\right),\qquad t\in(a,b],
\end{aligned}
\end{equation}
where
\[\Xi_k(t) := \left\{R\in Pr_k(m)\,\vert\,\tr(R H(\bfx(t))) = \ell_k(t)\right\}.\]
We therefore want to show that $\Xi_k(t)$ is a singleton for special values of $k$. This will imply that the one-sided derivatives are equal and thus making $\ell_k$ \emph{differentiable} on $(a,b)$.

First we need a general result about projection matrices.

\begin{lemma}\label{lem:proj_stand}
	Let $P,R\in Pr(m)$. Then
	\[0\leq\tr(RP)\leq \tr P\]
	with equality on the left if and only if $RP=0$ and with equality on the right if and only if $RP = P$.
\end{lemma}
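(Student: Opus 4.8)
The plan is to use the factored form of projections from \eqref{eq:projsplit} together with the cyclic property of the trace and basic positive-semidefiniteness. Write $P = QQ^T$ with $Q \in \mR^{m\times d}$, $d = \tr P$, and $Q^TQ = I_d$. Since $R \in Pr(m)$ is symmetric and idempotent, $R = R^TR$, so
\[\tr(RP) = \tr(R^TR QQ^T) = \tr\bigl((RQ)^T(RQ)\bigr) = \lVert RQ\rVert^2 \geq 0,\]
which gives the left inequality. Equality on the left holds iff $RQ = 0$, i.e. iff $RP = RQQ^T = 0$ (and conversely $RP=0$ forces $RQ = RP Q = 0$ since $PQ = Q$).

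For the right inequality, I would apply the same trick with the roles reorganized: write $R = SS^T$ analogously, but it is cleaner to use $I - R \in Pr(m)$. Indeed $I-R$ is symmetric and $(I-R)^2 = I - 2R + R = I - R$, so by what was just proved $\tr((I-R)P) \geq 0$, hence $\tr(RP) \leq \tr(P)$. Equality on the right holds iff $\tr((I-R)P) = 0$, which by the equality case already established for the left inequality means $(I-R)P = 0$, i.e. $RP = P$.

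The argument is essentially a two-line computation once the factorization \eqref{eq:projsplit} and the observation that $I-R$ is again a symmetric projection are in place; there is no real obstacle. The only point requiring a touch of care is the equivalence "$RQ = 0 \iff RP = 0$" in the equality cases, which uses $PQ = QQ^TQ = Q$ in one direction and is immediate in the other. One could alternatively phrase everything symmetrically in $P$ and $R$ by noting $\tr(RP) = \lVert RQ\rVert^2 = \lVert PS\rVert^2$ where $R = SS^T$, but the $I-R$ reduction keeps the equality analysis uniform and is the route I would take.
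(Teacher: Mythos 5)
Your proof is correct and is essentially the same argument as the paper's: both reduce everything to the nonnegativity of a squared Frobenius norm together with idempotency and the cyclic property of the trace. The paper computes $\lVert RP\rVert^2 = \tr(RP)$ and $\lVert RP-P\rVert^2 = \tr P - \tr(RP)$ directly, whereas you route the left inequality through the factorization $P=QQ^T$ and obtain the right one by replacing $R$ with $I-R$; these are only cosmetic differences.
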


\begin{proof}
Firstly,
\[0 \leq \lVert RP\rVert^2 = \tr(RP(RP)^T) = \tr(RPR) = \tr(RP),\]
and if $\tr(RP) = 0$, then $RP=0$.
Secondly,
\begin{align*}
0 \leq \lVert RP-P\rVert^2
	&= \tr\left((RP-P)(RP-P)^T\right)\\
	&= \tr(RPR - RP - PR + P)\\
	&= \tr P - \tr(RP).
\end{align*}
Thus $\tr(RP)\leq \tr P$ and if they are equal, then $RP = P$.
\end{proof}

\begin{lemma}[The special index $j^*$]\label{singleton}
Let $X = \sum_{i=1}^{m}\lambda_iP_i/d_i \in S(m)$ and set $\ell_k := \sum_{i=1}^k\lambda_i$ to be the sum of the $k$ smallest repeated eigenvalues.
For $j=1,\dots,m$, define the indexes
\begin{equation}\label{eq:jstar_def}
j_* := \min\{i\,\vert\,\lambda_i = \lambda_j\}\qquad\text{and}\qquad j^* := \max\{i\,\vert\,\lambda_i = \lambda_j\}.
\end{equation}
Then the set $\Xi_k := \left\{R\in Pr_k(m)\,\vert\,\tr(RX) = \ell_k\right\}$ is a singleton for $k=j^*$. Namely,
\[\Xi_{j^*}  = \left\{\sum_{i=1}^{j^*}\frac{P_i}{d_i}\right\}.\]
\end{lemma}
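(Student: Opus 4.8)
The goal is to show that among all rank-$j^*$ projections $R$, the one minimizing $\tr(RX)$ is unique and equals $\sum_{i=1}^{j^*}P_i/d_i$, which is precisely the spectral projection onto the span of the $j^*$ smallest (repeated) eigenvectors. First I would set $R_0 := \sum_{i=1}^{j^*}P_i/d_i$ and verify directly that $\tr(R_0 X) = \ell_{j^*}$, using $P_iP_k = \delta_{ik}P_i$, $\tr P_i = d_i$, and the representation $X = \sum_i \lambda_i P_i/d_i$; this confirms $R_0 \in \Xi_{j^*}$ so the set is nonempty. Note $R_0$ has rank $j^*$ because $\tr R_0 = \sum_{i=1}^{j^*} 1 = j^*$ (the sum counts each repeated index once), which is consistent with $j^*$ being the largest index sharing the value $\lambda_j$, so the full eigenspace of $\lambda_j$ is included.

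The substance is the uniqueness. Let $R \in \Xi_{j^*}$ be arbitrary. Write $R$ in the eigenbasis of $X$: decompose $\tr(RX) = \sum_{l} \lambda_{\alpha(l)} \tr(R P_{\alpha(l)})$ over the distinct eigenvalues. Setting $c_l := \tr(R P_{\alpha(l)})$, Lemma \ref{lem:proj_stand} gives $0 \le c_l \le d_{\alpha(l)}$, and since $\sum_l P_{\alpha(l)} = I$ we get $\sum_l c_l = \tr R = j^*$. So minimizing $\tr(RX)$ over $R\in Pr_{j^*}(m)$ amounts (at least as a lower bound) to minimizing $\sum_l \lambda_{\alpha(l)} c_l$ subject to $0\le c_l\le d_{\alpha(l)}$ and $\sum_l c_l = j^*$. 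Because $\sum_{i=1}^{j^*} d$-slots exactly saturate the blocks for the eigenvalues $\lambda_1,\dots,\lambda_{j^*}$ (here $j^*$ is chosen so that $\lambda_{j^*}$ is the \emph{last} repeat of its value — this is the crucial point: the cutoff falls exactly between two distinct eigenvalue blocks, not in the middle of one), this linear program has a unique optimizer: put $c_l = d_{\alpha(l)}$ for every block at or below $\lambda_j$'s value, and $c_l = 0$ above. Any deviation strictly increases the sum because there is a genuine gap $\lambda_{\alpha(l')} - \lambda_{\alpha(l)} > 0$ between the last "full" block and the first "empty" one — this is where $k = j^*$ (as opposed to a general $k$) is essential.

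It then remains to translate the unique LP optimum back into a statement about $R$ itself. From $c_l = d_{\alpha(l)}$ for the low blocks, Lemma \ref{lem:proj_stand} (equality on the right) gives $R P_{\alpha(l)} = P_{\alpha(l)}$ for each such $l$; from $c_{l} = 0$ for the high blocks, equality on the left gives $R P_{\alpha(l)} = 0$. Summing the first family of identities over the low blocks yields $R R_0 = R_0$ (after dividing appropriately, or rather: $R \sum_{\text{low}} P_{\alpha(l)} = \sum_{\text{low}} P_{\alpha(l)}$, and $\sum_{\text{low}} P_{\alpha(l)} = \sum_{i=1}^{j^*} P_i$, which is $d_i R_0$ block-by-block — cleaner to phrase via $\Pi := \sum_{i\le j^*}P_i$, so $R\Pi = \Pi$ and $R(I-\Pi) = 0$). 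Hence $R = R\Pi + R(I-\Pi) = \Pi$. Since $\Pi$ is exactly the projection $\sum_{i=1}^{j^*}P_i$ and also equals $\sum_{i=1}^{j^*}P_i/d_i$ viewed as a repeated sum — wait, these differ; I must be careful: $\sum_{i=1}^{j^*}P_i/d_i = \sum_{\text{low }l} P_{\alpha(l)}$ since within a block of multiplicity $d$ the $d$ repeated copies each contribute $P/d$. So $\Pi$ in repeated notation is $\sum_{i=1}^{j^*}P_i/d_i$, matching the claimed form. Thus $R = \sum_{i=1}^{j^*}P_i/d_i = R_0$, proving $\Xi_{j^*} = \{R_0\}$.

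The main obstacle I anticipate is bookkeeping between the three notations in play — unrepeated blocks $P_{\alpha(l)}$, repeated projections $P_i/d_i$, and the reduced projection $\Pi$ — and making airtight the claim that the cutoff at $k = j^*$ lands cleanly on a block boundary so that the linear program has a \emph{strictly} unique optimum. That strictness, i.e. the strict inequality $\lambda_{\alpha(l)} < \lambda_{\alpha(l')}$ across the cutoff, is exactly what fails for a generic $k$ and is the reason the lemma singles out $j^*$; I would state it explicitly as the pivot of the argument rather than burying it in the LP manipulation.
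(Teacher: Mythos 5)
Your proposal is correct and follows essentially the same route as the paper: both verify membership of $R_0=\sum_{i=1}^{j^*}P_i/d_i$ directly, then prove uniqueness by expanding $\tr(RX)=\sum_l\lambda_{\alpha(l)}\tr(RP_{\alpha(l)})$, invoking Lemma \ref{lem:proj_stand} for the bounds $0\le\tr(RP)\le\tr P$ and their equality cases, and exploiting the strict eigenvalue gap at the block boundary where $k=j^*$ lands. Your linear-programming phrasing is just a repackaging of the paper's inequality chain (obtained there by subtracting $\lambda_jI$), and your final step --- forcing the low blocks to saturate via the constraint $\sum_lc_l=j^*$ --- is exactly the paper's concluding trace argument $\tr(RR_j)=j^*=\tr R_j$.
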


\begin{proof}
Write $R_j := \sum_{i=1}^{j^*}P_i/d_i$. First of all, since $\tr R_j = \sum_{i=1}^{j^*}1 = j^*$, and $j^*-d_j = j_*-1=(j_*-1)^*$ (with $0^* := 0$), and
\[R_j = \sum_{i=1}^{j_*-1}\frac{P_i}{d_i} + \sum_{i=j_*}^{j^*}\frac{P_i}{d_i} = R_{j_*-1} + P_j,\]
it follows that $R_j\in Pr_{j^*}(m)$ by induction.

The leftward inclusion is clear since
\[\tr\left(R_jX\right) = \tr XR_{j_*-1} + \tr XP_j = \ell_{j_*-1} + d_j\lambda_j = \ell_{j^*}.\]
	
Now assume that $\tr(RX) = \ell_{j^*}$ for some $R\in Pr_{j^*}(m)$. We want to show that $R=R_j$. Split the matrix $Y := X - \lambda_j I$ into a negative semidefinite and positive semidefinite part as
\[Y = \sum_{i=1}^{j^*}(\lambda_i-\lambda_j)P_i/d_i + \sum_{i=j^*+1}^m(\lambda_i-\lambda_j)P_i/d_i.\]
We have $\tr(RY) = \tr(RX)-\lambda_j\tr R = \ell_{j^*} - \lambda_jj^*$, so
\begin{align*}
\ell_{j^*} - \lambda_jj^*
	&= \sum_{i=1}^{j^*}(\lambda_i-\lambda_j)\tr(RP_i)/d_i + \sum_{i=j^*+1}^m(\lambda_i-\lambda_j)\tr(RP_i)/d_i\\
	&\geq \sum_{i=1}^{j^*}(\lambda_i-\lambda_j)\tr(RP_i)/d_i + 0\\
	&\geq \sum_{i=1}^{j^*}(\lambda_i-\lambda_j)\tr P_i/d_i = \sum_{i=1}^{j^*}(\lambda_i-\lambda_j) = \ell_{j^*} - \lambda_jj^*
\end{align*}
since $0\leq\tr(RP_i)\leq \tr P_i$ by Lemma \ref{lem:proj_stand}. The inequalities are therefore equalities, which in particular means that $\sum_{i=j^*+1}^m(\lambda_i-\lambda_j)\tr(RP_i)/d_i = 0$.
Since the coefficients $\lambda_i-\lambda_j$ are positive, we must have that $\tr(RP_i) = 0$ and thus $RP_i = 0$ for $i = j^*+1,\dots,m$ by the Lemma. It follows that
\[RR_j =  R\left(I-\sum_{i=j^*+1}^mP_i/d_i\right) = R,\]
and since  $\tr(RR_j) = \tr R = j^* = \tr(R_j)$, we can conclude that also $RR_j = R_j$.
\end{proof}

\begin{proposition}\label{prop:diffoncurves}
Let $H\in C^1(\Omega,S(m))$
and write
\[ H(x) = \sum_{i=1}^m \frac{\lambda_i(x)}{d_i(x)}P_i(x).\]
Assume that the eigenprojection $P_j(x)$ to the $j$'th eigenvalue $\lambda_j(x)$ has constant dimension along a $C^1$ curve $\bfx\colon [a,b]\to\Omega$ in $\Omega$. That is,
\[\tr P_j(\bfx(t)) = d_j(\bfx(t)) = d_j = \text{const.}\]
Then $\lambda_j\circ\bfx$ is differentiable on $(a,b)$ and
\[\frac{\dd}{\dd t}\lambda_j(\bfx(t)) = \frac{1}{d_j}\tr\Big(P_j(\bfx(t))\DD_{\bfx'(t)} H(\bfx(t))\Big).\]
\end{proposition}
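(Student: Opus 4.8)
The plan is to write $\lambda_j\circ\bfx$ as a difference of two partial sums of eigenvalues, invoke Ky Fan's principle to realize each partial sum as a minimum over a Grassmannian, and then differentiate each minimum with Lemma~\ref{lem:derofmin}, using Lemma~\ref{singleton} to force the relevant minimizing sets to be singletons so that the two one-sided derivatives coincide. All of the machinery around \eqref{onesidedderivatives} is already in place, so the work is to identify the correct indices and to check that they behave well along the curve.

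First I would show that the indices $j_*=j_*(\bfx(t))$ and $j^*=j^*(\bfx(t))$ of \eqref{eq:jstar_def} are constant on $[a,b]$. Since the eigenvalues of a symmetric matrix depend continuously on the matrix and $\bfx$ is continuous, every $t\mapsto\lambda_i(\bfx(t))$ is continuous. Fix $t_0\in[a,b]$. Then $\lambda_{j_*-1}(\bfx(t_0))<\lambda_j(\bfx(t_0))<\lambda_{j^*+1}(\bfx(t_0))$, with the obvious reading when $j_*=1$ or $j^*=m$, and, being strict inequalities between continuous functions, they persist for $t$ near $t_0$. For such $t$ the ordering of the eigenvalues then forces $\{i:\lambda_i(\bfx(t))=\lambda_j(\bfx(t))\}\subseteq\{j_*,\dots,j^*\}$; since this set has cardinality $d_j(\bfx(t))=d_j=j^*-j_*+1=\#\{j_*,\dots,j^*\}$ by hypothesis, it equals $\{j_*,\dots,j^*\}$, so $j_*$ and $j^*$ are locally, and hence globally on the interval, constant. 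In particular $d_j\,\lambda_j(\bfx(t))=\sum_{i=j_*}^{j^*}\lambda_i(\bfx(t))=\ell_{j^*}(t)-\ell_{j_*-1}(t)$ for all $t$, where $\ell_k(t):=\sum_{i=1}^k\lambda_i(\bfx(t))$ and $\ell_0\equiv0$.

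Next I would apply \eqref{onesidedderivatives}. By Lemma~\ref{singleton} with $X=H(\bfx(t))$ and $k=j^*$, the set $\Xi_{j^*}(t)$ is the singleton $\big\{\sum_{i=1}^{j^*}P_i(\bfx(t))/d_i(\bfx(t))\big\}$, so the left and right derivatives of $\ell_{j^*}$ at $t$ agree and $\ell_{j^*}$ is differentiable on $(a,b)$ with derivative $\tr\big(\big(\sum_{i=1}^{j^*}P_i(\bfx(t))/d_i(\bfx(t))\big)\DD_{\bfx'(t)}H(\bfx(t))\big)$. Because $\lambda_{j_*-1}(\bfx(t))<\lambda_{j_*}(\bfx(t))$ we have $(j_*-1)^*=j_*-1$, so the same reasoning with $k=j_*-1$ (trivial if $j_*=1$) shows $\ell_{j_*-1}$ is differentiable with derivative $\tr\big(\big(\sum_{i=1}^{j_*-1}P_i(\bfx(t))/d_i(\bfx(t))\big)\DD_{\bfx'(t)}H(\bfx(t))\big)$. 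Subtracting and dividing by $d_j$, the function $\lambda_j\circ\bfx$ is differentiable; and since $P_i(\bfx(t))=P_j(\bfx(t))$ and $d_i(\bfx(t))=d_j$ for $j_*\leq i\leq j^*$, one has $\sum_{i=j_*}^{j^*}P_i(\bfx(t))/d_i(\bfx(t))=d_j\cdot P_j(\bfx(t))/d_j=P_j(\bfx(t))$, which gives the asserted formula $\frac{\dd}{\dd t}\lambda_j(\bfx(t))=\frac{1}{d_j}\tr\big(P_j(\bfx(t))\DD_{\bfx'(t)}H(\bfx(t))\big)$.

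The hard part is the first step: one must make sure the constant-dimension hypothesis really pins $j^*$ (equivalently $j_*$) down along the whole curve, since otherwise the partial sum $\ell_{j^*}$ being differentiated would change its meaning from point to point and the identity $d_j\,\lambda_j\circ\bfx=\ell_{j^*}-\ell_{j_*-1}$ would break. After that the argument is purely a matter of feeding the right manifold $Pr_k(m)$ and the right index into Lemma~\ref{lem:derofmin} and Lemma~\ref{singleton} and simplifying the resulting sum of eigenprojections.
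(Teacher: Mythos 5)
Your proof is correct and takes essentially the same route as the paper: write $d_j\,\lambda_j\circ\bfx=\ell_{j^*}-\ell_{j_*-1}$, use Lemma~\ref{singleton} to make the minimizing sets in \eqref{onesidedderivatives} singletons, and subtract. The only difference is that you prove the constancy of $j_*$ and $j^*$ along the curve directly, whereas the paper delegates this to Lemma~\ref{lem:semicont} and remarks that only the constancy of the difference $j^*-(j_*-1)=d_j$ is needed; your extra care here is harmless and arguably makes the identity $d_j\,\lambda_j\circ\bfx=\ell_{j^*}-\ell_{j_*-1}$, which must hold on a whole neighbourhood of each $t$ before it can be differentiated, more transparently justified.
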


\begin{proof}
Let $\bfx\in C^1([a,b],\Omega)$ and let $j\in\{1,\dots,m\}$.
By the definition \eqref{eq:jstar_def}, the indexes $j_*$ and $j^*$ are in general functions of $t$. By a continuity argument (Lemma \ref{lem:semicont}) one can prove that they are constant, but it turns out that this is insignificant for the proof of the proposition. What matters is that the difference $j^* - (j_*-1) = d_j$
is constant along the curve.

Since $j_*-1 = (j_*-1)^*$ it follows by Lemma \ref{singleton} that the sets
\[\Xi_{j^*}(t) := \left\{R\in Pr_{j^*}(m)\,\vert\,\tr\Big(R H(\bfx(t))\Big) = \ell_{j^*}(t)\right\} = \left\{\sum_{i=1}^{j^*}\frac{P_i(\bfx(t))}{d_i(\bfx(t))}\right\}\]
and
\[\Xi_{j_*-1}(t) :=\left\{R\in Pr_{j_*-1}(m)\,\vert\,\tr\Big(R H(\bfx(t))\Big) = \ell_{j_*-1}(t)\right\} = \left\{\sum_{i=1}^{j_*-1}\frac{P_i(\bfx(t))}{d_i(\bfx(t))}\right\}\]
are singletons for each $t\in[a,b]$.
Here,
\[\ell_{j^*}(t) := \sum_{i=1}^{j^*}\lambda_i(\bfx(t))\qquad \text{and}\qquad \ell_{j_*-1}(t) := \sum_{i=1}^{j_*-1}\lambda_i(\bfx(t)).\]
Therefore, by the spesial case \eqref{onesidedderivatives} of Lemma \ref{lem:derofmin}, we get that the derivatives of $\ell_{j^*}$ and $\ell_{j_*-1}$ exist and that they are given by
\[\ell_{j^*}'(t) = \tr\left(\sum_{i=1}^{j^*}\frac{P_i(\bfx(t))}{d_i(\bfx(t))}\DD_{\bfx'(t)} H(\bfx(t))\right)\]
and
\[\ell_{j_*-1}'(t) = \tr\left(\sum_{i=1}^{j_*-1}\frac{P_i(\bfx(t))}{d_i(\bfx(t))}\DD_{\bfx'(t)} H(\bfx(t))\right).\]
Since
\[\ell_{j^*}(t) - \ell_{j_*-1}(t) = \sum_{i=j_*}^{j^*}\lambda_i(\bfx(t)) = d_j\lambda_j(\bfx(t)),\]
and
\[\sum_{i=1}^{j^*}\frac{P_i(\bfx(t))}{d_i(\bfx(t))} - \sum_{i=1}^{j_*-1}\frac{P_i(\bfx(t))}{d_i(\bfx(t))} = \sum_{i=j_*}^{j^*}\frac{P_i(\bfx(t))}{d_i(\bfx(t))} = P_j(\bfx(t)),\]
it follows that
\[d_j\frac{\dd}{\dd t}\lambda_j(\bfx(t)) = \ell_{j^*}'(t) - \ell_{j_*-1}'(t) = \tr\left(P_j(\bfx(t))\DD_{\bfx'(t)} H(\bfx(t))\right)\]
which is what we wanted to prove.
\end{proof}

This completes the proof of Theorem \ref{thm:projection_directional_derivative}, and the next proposition completes the proof of Theorem \ref{thm:projectionderivative} by showing that the two assumptions \eqref{eq:projectionderivative_condition} are equivalent.

\begin{proposition}\label{prop:equivalentprop}
Let $H\in C^1(\Omega,S(m))$
and write
\[ H(x) = \sum_{i=1}^m \frac{\lambda_i(x)}{d_i(x)}P_i(x).\]
The following are equivalent for $j\in\{1,\dots,m\}$ in $\Omega$.
\begin{enumerate}[(a)]
\item $\lambda_j$ is $C^1$ and $P_j$ has constant dimension.
\item $\lambda_j$ is differentiable and $P_j$ has constant dimension.
\item $P_j$ is differentiable.
\item $P_j$ is continuous.
\end{enumerate}
\end{proposition}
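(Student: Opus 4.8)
The plan is to establish the circle of implications (a) $\Rightarrow$ (b) $\Rightarrow$ (c) $\Rightarrow$ (d) $\Rightarrow$ (a). Two of the four are immediate: (a) $\Rightarrow$ (b), since a $C^1$ function is differentiable, and (c) $\Rightarrow$ (d), since a differentiable matrix-valued function is continuous. The implication (b) $\Rightarrow$ (c) is exactly the content of Theorem \ref{thm:projectionderivative} proved under the second alternative in \eqref{eq:projectionderivative_condition} — one should stress that that argument uses only differentiability of $\lambda_j$ together with constancy of $\tr P_j$ and never the continuity of $P_j$, so invoking it here is not circular. Hence the whole proposition reduces to the one substantive step, (d) $\Rightarrow$ (a): if $P_j$ is continuous on $\Omega$, then $\tr P_j$ is constant and $\lambda_j$ is $C^1$.

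For the constant-dimension half of (a), note that $\tr P_j$ is a continuous integer-valued function on the connected set $\Omega$ (a domain), hence globally constant; denote it $d_j$. For the regularity of $\lambda_j$, I would then feed this into Proposition \ref{prop:diffoncurves}: along every $C^1$ curve $\bfx$ in $\Omega$ the composition $\lambda_j\circ\bfx$ is differentiable, with $\frac{\dd}{\dd t}\lambda_j(\bfx(t)) = \frac{1}{d_j}\tr\big(P_j(\bfx(t))\DD_{\bfx'(t)}H(\bfx(t))\big)$. Specializing to the straight segments $t\mapsto x+te_i$ (which lie in $\Omega$ for small $t$) shows that each partial derivative $\partial_i\lambda_j(x) = \frac{1}{d_j}\tr\big(P_j(x)\DD_{e_i}H(x)\big)$ exists at every $x\in\Omega$. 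Since $P_j$ is continuous and $H\in C^1$ makes $x\mapsto\DD_{e_i}H(x)$ continuous, every $\partial_i\lambda_j$ is continuous on $\Omega$. The classical theorem that a function on an open set whose partial derivatives exist everywhere and are continuous is continuously differentiable then yields $\lambda_j\in C^1(\Omega)$, with gradient the row vector $e\mapsto\frac{1}{d_j}\tr\big(P_j\DD_eH\big)$. This completes the circle.

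I expect the only genuine subtlety to be the circularity bookkeeping flagged above: one must verify that the direct portion of the proof of Theorem \ref{thm:projectionderivative} really does rest solely on condition (b), because it is this very proposition that is used there to dispose of the continuity alternative in \eqref{eq:projectionderivative_condition}. The remaining ingredients are routine — passing from "differentiable along all $C^1$ curves" to "all partials exist" is trivial once one restricts to straight lines, and the continuous-partials criterion is standard — but it is worth making explicit that it is precisely the connectedness of $\Omega$ that upgrades the integer-valued $\tr P_j$ from locally constant to globally constant.
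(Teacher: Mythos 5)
Your argument is correct and follows essentially the same route as the paper: (a) $\Rightarrow$ (b) and (c) $\Rightarrow$ (d) are immediate, (b) $\Rightarrow$ (c) is the direct part of the proof of Theorem \ref{thm:projectionderivative}, and (d) $\Rightarrow$ (a) comes from the constancy of the integer-valued $\tr P_j$ on the connected domain together with Proposition \ref{prop:diffoncurves} and the continuity of the resulting partial derivatives. Your explicit remarks on the non-circularity of invoking Theorem \ref{thm:projectionderivative} and on the role of connectedness are correct elaborations of points the paper leaves implicit.
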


\begin{proof}
(a) $\Rightarrow$ (b) is immediate and the proof of Theorem \ref{thm:projectionderivative} gives (b) $\Rightarrow$ (c). The step (c) to (d) is again trivial, and if assuming (d), the dimension of $P_j$ is of course constant and by Proposition \ref{prop:diffoncurves}, the partial derivatives of $\lambda_j$ exists and are given by
\[\frac{\partial}{\partial x_i}\lambda_j(x) = \DD_{e_i}\lambda_j(x) = \frac{1}{d_j}\tr\Big(P_j(x)\DD_{e_i} H(x)\Big).\]
Thus (d) $\Rightarrow$ (a) since the partial derivatives then are seen to be continuous and $\lambda_j$ is therefore differentiable with a continuous gradient.
\end{proof}

We now gather the various regularity properties for eigenvalues of symmetric matrices. For completeness, we also record some statements valid when $ H$ is only continuous.

\begin{theorem}\label{thm:lambda_reg}
Let $H\colon\Omega\to S(m)$ be given 
and write
\[ H(x) = \sum_{i=1}^m \frac{\lambda_i(x)}{d_i(x)}P_i(x)\]
where $\lambda_1(x)\leq\cdots\leq\lambda_m(x)$ and $d_i(x) = \tr P_i(x)$.
The following hold for $j\in\{1,\dots,m\}$.
\begin{enumerate}[(A)]
\item Zero'th order properties. Assume that $H\in C(\Omega,S(m))$.
\begin{enumerate}[(i)]
\item $\lambda_j$ is continuous in $\Omega$.
\item If $H\in C^{\alpha}(\Omega,S(m))$ for some $0<\alpha\leq 1$, then $\lambda_j\in C^{\alpha}(\Omega)$ with the same Hölder/Lipchitz-constant as $H$.
\end{enumerate}
\item First order properties. Assume that $H\in C^1(\Omega,S(m))$.
\begin{enumerate}[(i)]
\item If $P_j$ has constant dimension $\tr P_j(x) = d_j$ in $\Omega$, then $\lambda_j$ has directional derivatives $\DD_e\lambda_j$ satisfying
\[\DD_e\lambda_j(x)\cdot P_j(x) = P_j(x)\DD_{e} H(x)P_j(x)\]
in every direction $e\in\mR^n$.
In particular,
\[\DD_e\lambda_j(x) = \frac{1}{d_j}\tr\Big(P_j(x)\DD_{e} H(x)\Big)\]
and
\[\DD_e\lambda_j(x) = \xi^T\DD_e H(x)\xi = \xi^T\nabla_\xi H(x) e\]
for any $\xi\in P_j(x)(\mR^m)\cap\mathbb{S}^{m-1}$.
\item Suppose that one (and therefore all) of the conditions (a)-(d) from Proposition \ref{prop:equivalentprop} holds. Then
\[\vert\nabla\lambda_j(x)\vert \leq \frac{1}{\sqrt{d_j}}\max_{e\in\mathbb{S}^{n-1}}\lVert\DD_e H(x)\rVert,\]
and
\begin{equation}\label{3.}
\nabla\lambda_j(x) = \xi^T \nabla_{\xi} H(x)
\end{equation}
for any $\xi\in P_j(x)(\mR^m)\cap\mathbb{S}^{m-1}$. Alternatively,
\begin{equation}\label{gradformula}
\nabla\lambda_j(x)e = \DD_e\lambda_j(x) = \frac{1}{d_j}\tr\Big(P_j(x)\DD_{e} H(x)\Big)
\end{equation}
for every $e\in\mR^n$.
\end{enumerate}
\item Second order properties. Assume that $H\in C^2(\Omega,S(m))$.
\begin{enumerate}[(i)]
\item Let $a\in\mR^n$. If $P_j$ has constant dimension $\tr P_j(x) = d_j$ in $\Omega$, then $\DD_a\lambda_j$ exists and has directional derivatives $\DD_b\DD_a\lambda_j$ satisfying
\begin{align*}
\DD_b\DD_a\lambda_j(x)\cdot P_j(x)
	&= P_j(x) \Big(\DD_{b}\DD_a H(x)\\
	&\qquad{}+ \DD_a H(x)A_j(x)\DD_{b}H(x)\\
	&\qquad{}+ \DD_b H(x)A_j(x)\DD_{a}H(x)\Big) P_j(x)
\end{align*}
in every direction $b\in\mR^n$. In particular,
\[\DD_b\DD_a\lambda_j(x) = \frac{1}{d_j}\tr\Big(P_j(x) \big[\DD_{b}\DD_a H(x) + 2\DD_a H(x)A_j(x)\DD_{b}H(x)\big]\Big)\]
and
\begin{align*}
\DD_b\DD_a\lambda_j(x)
	&= \xi^T\Big(\DD_{b}\DD_a H(x) + 2\DD_a H(x)A_j(x)\DD_{b}H(x)\Big)\xi\\
	&= a^T\Big(\nabla_{\xi}(\nabla_{\xi} H)^T(x) + 2(\nabla_{\xi} H(x))^T A_j(x)\nabla_{\xi} H(x)\Big)b
\end{align*}
for any $\xi\in P_j(x)(\mR^m)\cap\mathbb{S}^{m-1}$.
\item Suppose that one (and therefore all) of the conditions (a)-(d) from Proposition \ref{prop:equivalentprop} holds. Then $\nabla\lambda_j$ is differentiable in $\Omega$
with Hessian $\mathcal{H}\lambda_j := \nabla(\nabla\lambda_j^T)$ given by
\begin{equation}\label{eq:H3}
\mathcal{H}\lambda_j(x) = \nabla_{\xi}(\nabla_{\xi} H)^T(x) + 2(\nabla_{\xi} H(x))^T A_j(x)\nabla_{\xi} H(x)
\end{equation}
for any $\xi\in P_j(x)(\mR^m)\cap\mathbb{S}^{m-1}$. Alternatively,
\begin{equation}\label{eq:lambdaHessiantensor}
\begin{aligned}
a^T\mathcal{H}\lambda_j(x)b
	&= \DD_b\DD_a\lambda_j(x)\\
	&= \frac{1}{d_j}\tr\Big(P_j(x) \big[\DD_{b}\DD_a H(x) + 2\DD_a H(x)A_j(x)\DD_{b}H(x)\big]\Big)
\end{aligned}
\end{equation}
for every $a,b\in\mR^n$.
\end{enumerate}
\end{enumerate}
\end{theorem}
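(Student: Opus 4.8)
The argument would handle the three groups of properties in turn, each reducing to results already established in the paper together with standard facts.

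\textbf{Zero'th order (A).} Here I would simply invoke the classical Lipschitz dependence of eigenvalues on the matrix. By the Courant--Fischer min--max formula (equivalently, by iterating Ky Fan's principle) the eigenvalue $\lambda_j(X)$ of $X\in S(m)$ is the minimum over $j$-dimensional subspaces $V\subseteq\mR^m$ of $\max_{\,\xi\in V,\,\lvert\xi\rvert=1}\xi^TX\xi$. Since $X\mapsto\xi^TX\xi$ is $1$-Lipschitz in the operator norm, hence in $\lVert\cdot\rVert$ because $\lVert\cdot\rVert_{\mathrm{op}}\leq\lVert\cdot\rVert$, taking $\max$ over $\xi\in V$ and then $\min$ over $V$ preserves this, so $\lvert\lambda_j(X)-\lambda_j(Y)\rvert\leq\lVert X-Y\rVert$ for all $X,Y\in S(m)$. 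Composing with $H$ gives (i), and (ii) with the same Hölder/Lipschitz constant.

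\textbf{First order (B).} For (i), fix $x\in\Omega$ and $e\in\mR^n$; the segment $\bfx(t):=x+te$ lies in $\Omega$ for small $\lvert t\rvert$ and $\tr P_j$ is constant on it, so Proposition \ref{prop:diffoncurves} gives that $\DD_e\lambda_j(x)$ exists and equals $\tfrac1{d_j}\tr(P_j(x)\DD_eH(x))$. For the sharper identity I would differentiate $H(\bfx(t))P_j(\bfx(t))=\lambda_j(\bfx(t))P_j(\bfx(t))$ at $t=0$: this is legitimate since $t\mapsto H(\bfx(t))$ is $C^1$ with derivative $\DD_eH(x)$ by \eqref{chainrule}, $t\mapsto P_j(\bfx(t))$ is differentiable with derivative $\DD_eP_j(x)$ by Theorem \ref{thm:projection_directional_derivative}, and $t\mapsto\lambda_j(\bfx(t))$ is differentiable by the previous line. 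Multiplying the product-rule identity by $P_j(x)$ on the left and right and using $P_j(x)H(x)=H(x)P_j(x)=\lambda_j(x)P_j(x)$ cancels the two $\DD_eP_j$ terms and leaves $\DD_e\lambda_j(x)\cdot P_j(x)=P_j(x)\DD_eH(x)P_j(x)$; the trace re-derives the $\tfrac1{d_j}\tr$ formula, and the sandwich by a unit $\xi\in P_j(x)(\mR^m)$ (so $P_j(x)\xi=\xi$) gives $\DD_e\lambda_j(x)=\xi^T\DD_eH(x)\xi=\xi^T\nabla_\xi H(x)e$, the last step by \eqref{sym1}. For (ii), assume one of the equivalent conditions (a)--(d) of Proposition \ref{prop:equivalentprop}; then $\lambda_j$ is differentiable, so $\nabla\lambda_j(x)e=\DD_e\lambda_j(x)$ for every $e$, which with (i) gives \eqref{gradformula} and --- the identity holding for all $e$ --- the row-vector identity $\nabla\lambda_j(x)=\xi^T\nabla_\xi H(x)$. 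For the bound, Cauchy--Schwarz in the Frobenius inner product gives $\lvert\tr(P_j\DD_eH)\rvert\leq\lVert P_j\rVert\,\lVert\DD_eH\rVert=\sqrt{d_j}\,\lVert\DD_eH\rVert$, so $\lvert\nabla\lambda_j(x)e\rvert=\lvert\DD_e\lambda_j(x)\rvert\leq\tfrac1{\sqrt{d_j}}\lVert\DD_eH(x)\rVert$, and maximizing over $e\in\mathbb{S}^{n-1}$ yields the estimate.

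\textbf{Second order (C).} For (i), constancy of $\tr P_j$ in $\Omega$ lets me use (B)(i) pointwise and Theorem \ref{thm:projection_directional_derivative} for the directional differentiability of $P_j$, while $H\in C^2$ makes $\DD_aH$ a $C^1$ function. Fixing $x$ and a direction $b$, I would differentiate the identity $\DD_a\lambda_j\cdot P_j=P_j\DD_aH\,P_j$ from (B)(i) along $\bfx(t)=x+tb$ at $t=0$ --- permissible because $t\mapsto P_j(\bfx(t))$, $t\mapsto\DD_aH(\bfx(t))$ and $t\mapsto\DD_a\lambda_j(\bfx(t))$ (which by (B)(i) equals $\tfrac1{d_j}\tr(P_j(\bfx(t))\DD_aH(\bfx(t)))$) are each differentiable there. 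This shows $\DD_b\DD_a\lambda_j(x)$ exists, and after multiplying the product-rule identity by $P_j(x)$ on both sides and simplifying with $A_j(x)P_j(x)=P_j(x)A_j(x)=0$ and $\DD_bP_j=P_j\DD_bH\,A_j+A_j\DD_bH\,P_j$ (so that $P_j\DD_bP_j\,P_j=0$, $P_j\DD_bP_j=P_j\DD_bH\,A_j$, $\DD_bP_j\,P_j=A_j\DD_bH\,P_j$) it produces the sandwiched formula in the statement. Its trace then gives the $\tfrac1{d_j}\tr$ form --- the factor $2$ coming from $\tr(P_j\DD_aH\,A_j\DD_bH)=\tr(P_j\DD_bH\,A_j\DD_aH)$ (transpose and cyclic permutation, all factors being symmetric) --- and the sandwich by a unit $\xi\in P_j(x)(\mR^m)$ gives the $\xi$-form. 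To reach the $\nabla_\xi$ notation I would use \eqref{sym1} to write $\DD_aH(x)\xi=\nabla_\xi H(x)a$, and observe that $\xi^T\DD_b\DD_aH(x)\xi$ is the second directional derivative (directions $a,b$) of the scalar $C^2$ function $x\mapsto\xi^TH(x)\xi$, whose Hessian is $\nabla_\xi(\nabla_\xi H)^T(x)$ because $\nabla(\xi^TH\xi)=\xi^T\nabla_\xi H$. For (ii), assume one of (a)--(d): then $P_j$ is continuous, $\lambda_j\in C^1$ and $\tr P_j\equiv d_j$, so (i) applies everywhere. The symmetric matrix $M_j(x):=\nabla_\xi(\nabla_\xi H)^T(x)+2(\nabla_\xi H(x))^TA_j(x)\nabla_\xi H(x)$ is independent of the choice of $\xi\in P_j(x)(\mR^m)\cap\mathbb{S}^{m-1}$ --- its bilinear form equals the $\xi$-free $\tfrac1{d_j}\tr(P_j(x)[\DD_b\DD_aH(x)+2\DD_aH(x)A_j(x)\DD_bH(x)])$ --- and continuous in $x$ (since $P_j$, $H$, $\DD_aH$, $\DD_b\DD_aH$ are continuous and $A_j(x)$, the pseudoinverse of the rank-$(m-d_j)$ matrix $\lambda_j(x)I-H(x)$, is continuous). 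By (i), $\DD_b(\DD_a\lambda_j)(x)=a^TM_j(x)b$ for all $a,b$; in particular the partial derivatives of the components of the continuous gradient field $\nabla\lambda_j$ exist and equal the continuous entries of $M_j$, so $\nabla\lambda_j$ is $C^1$, hence differentiable, with $\mathcal{H}\lambda_j:=\nabla(\nabla\lambda_j^T)=M_j$; this is \eqref{eq:H3}, and \eqref{eq:lambdaHessiantensor} is the trace form together with $a^T\mathcal{H}\lambda_j(x)b=\DD_b\DD_a\lambda_j(x)$.

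\textbf{Main obstacle.} The classical estimate in (A), the bookkeeping with $P_j$ and $A_j$, and the dictionary between the $\DD$- and $\nabla_\xi$-notations are lengthy but mechanical. The genuine care is in (B)(i) and (C)(i): several of the differentiated matrix functions ($P_j$, and in (C) also $\DD_a\lambda_j$) are known only to be \emph{directionally} differentiable, not differentiable, so one must restrict everything to a segment $t\mapsto x+te$ --- along which $t\mapsto P_j(x+te)$ is genuinely differentiable at $t=0$ by Theorem \ref{thm:projection_directional_derivative} --- and apply the ordinary one-variable product rule. A secondary subtlety is the continuity of $A_j$ used in (C)(ii), which I would deduce from the constancy of $\rank(\lambda_jI-H)=m-d_j$ (continuity of the pseudoinverse on a constant-rank stratum).
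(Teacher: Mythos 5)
Your proposal is correct, and for the core computations --- parts (B)(i) and (C)(i), where one differentiates $HP_j=\lambda_jP_j$ and then $\DD_a\lambda_j\cdot P_j=P_j\DD_aH\,P_j$ along a segment and sandwiches with $P_j$ --- it coincides with the paper's proof, including the reduction to Proposition \ref{prop:diffoncurves}, Theorem \ref{thm:projection_directional_derivative}, and the trace/cyclicity argument for the factor $2$. Two sub-arguments genuinely differ. For (A) you use Courant--Fischer/Weyl to get $1$-Lipschitz dependence of each $\lambda_j$ on the matrix, whereas the paper cites the $\ell^2$-type bound $\sum_i\lvert\lambda_i-\tilde\lambda_i\rvert^2\leq\lVert E\rVert^2$ from Horn--Johnson; both yield (i)--(ii), yours being more self-contained. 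The more substantive divergence is (C)(ii): the paper deduces differentiability of $\nabla\lambda_j$ directly from \eqref{gradformula}, since under (a)--(d) the factor $P_j$ is differentiable by Theorem \ref{thm:projectionderivative} and $\DD_eH$ is differentiable when $H\in C^2$, and then reads off the Hessian from (C)(i); you instead show the candidate Hessian $M_j$ is continuous and invoke the ``continuous partials imply $C^1$'' theorem. Your route forces you to prove continuity of $A_j$, which is worth flagging because the paper's own remark warns that $A_j$ ``cannot be assumed to be continuous or bounded''; under (a)--(d) your constant-rank argument is nonetheless sound (most transparently, $A_j=(\lambda_jI-H+P_j)^{-1}-P_j$ with the inverted matrix continuous and nonsingular when $d_j$ is constant and $P_j$, $\lambda_j$ are continuous), so there is no gap --- but the paper's argument is shorter precisely because it sidesteps $A_j$ at this step, and as a bonus your version shows $\lambda_j$ is in fact $C^2$ rather than merely twice differentiable.
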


\begin{remark}
\begin{itemize}
\item Note that the eigenvectors $\xi$ depend on $x\in\Omega$.
\item In (C), the matrices $\nabla_{p}(\nabla_{q} H)^T$ and $\DD_{b}\DD_a H$ represent the second order derivatives of $H$. The first one is the Hessian matrix of the $C^2$ function $x\mapsto p^T H(x)q$, and is therefore both in $S(n)$ and symmetric in $p$ and $q$. The latter is the appropriate linear combination of the second order partial derivatives $\DD_{e_i}\DD_{e_k}H(x) = \frac{\partial^2}{\partial x_i\partial x_k}H(x)\in S(m)$.
\item Unlike in the case of the gradient, the expressions for $\mathcal{H}\lambda_j$ contain the matrix $A_j$ that cannot be assumed to be continuous or bounded.
\end{itemize}
\end{remark}

\begin{proof}[Proof of (A)]
It is a well known fact that eigenvalues depend continuously on the matrix. Corollary 6.3.8 in \cite{MR2978290} gives the estimate
\[\sum_{i=1}^m\lvert \lambda_i-\tilde{\lambda_i}\rvert^2 \leq \lVert E\rVert^2\]
whenever $\lambda_1\leq\cdots\leq\lambda_m$ are the eigenvalues of $H_0\in S(m)$, and $\tilde{\lambda}_1\leq\cdots\leq\tilde{\lambda}_m$ are the eigenvalues of $H_0 + E\in S(m)$.
Our results follow by setting $H_0 = H(x)$, $E = H(x) - H(x+y)$, and by using that $\lVert E\rVert = o(1)$ and $\lVert E\rVert \leq C\lvert y\rvert^\alpha$, respectively.
\end{proof}

\begin{proof}[Proof of (B)]
Part (i): Proposition \ref{prop:diffoncurves} yields the formula $\DD_e\lambda_j = \frac{1}{d_j}\tr(P_j\DD_e H)$. But since also $P_j$ is directionally differentiable by Theorem \ref{thm:projection_directional_derivative}, the derivative of $\lambda_j P_j = HP_j$ is $\DD_e\lambda_j\cdot P_j + \lambda_j\DD_e P_j = \DD_e H P_j + H\DD_e P_j$ and the more general formula
\[\DD_e\lambda_j\cdot P_j = P_j\DD_e H P_j\]
is obtained by multiplying on the left with $P_j$.

For Part (ii), assume that the conditions (a)-(d) from Proposition \ref{prop:equivalentprop} hold.

Write the length of the gradient as $\max_{e\in\mathbb{S}^{n-1}}\nabla\lambda_j(x)e$ and use \eqref{gradformula} to get
\begin{align*}
\vert\nabla\lambda_j(x)\vert
	&= \max_{e\in\mathbb{S}^{n-1}}\frac{1}{d_j}\tr\Big(P_j(x)\DD_e H(x)\Big)\\
	&\leq \max_{e\in\mathbb{S}^{n-1}}\frac{1}{d_j} \lVert P_j(x)\rVert\lVert\DD_e H(x)\rVert\\
	&= \frac{1}{\sqrt{d_j}}\max_{e\in\mathbb{S}^{n-1}}\lVert\DD_e H(x)\rVert.
\end{align*}

\end{proof}

\begin{proof}[Proof of (C)]
Part (i): Theorem \ref{thm:projection_directional_derivative} and (B) (i) implies that $\DD_a\lambda_j = \frac{1}{d_j}\tr(P_j\DD_aH)$ and that it is directionally differentiable when $H$ is $C^2$. The identity $\DD_a\lambda_j\cdot P_j = P_j\DD_a H P_j$ can therefore be differentiated yielding
\[\DD_b\DD_a\lambda_j\cdot P_j + \DD_a\lambda_j\cdot \DD_b P_j = \DD_{b}P_j\DD_a H P_j + P_j \DD_{b}\DD_a H P_j + P_j \DD_a H\DD_b P_j.\]
Multiply from both sides with $P_j$ and use that $P_j\DD_bP_j = P_j\DD_b H A_j$ and $P_j\DD_b P_j P_j = 0$ to get
\begin{align*}
\DD_b\DD_a\lambda\cdot P_j
	&= P_j\DD_b H A_j\DD_a H P_j + P_j \DD_{b}\DD_a H P_j + P_j \DD_a HA_j\DD_b H P_j\\
	&= P_j\Big(\DD_{b}\DD_a H + \DD_b H A_j\DD_a H + \DD_a HA_j\DD_b H\Big) P_j.
\end{align*}
The other identities follow from the cyclic property of the trace and the symmetry of the factors, and by using \eqref{sym1} and \eqref{sym2}.

Part (ii): When the conditions (a)-(d) from Proposition \ref{prop:equivalentprop} hold, formula \eqref{gradformula} shows that the gradient $\nabla\lambda$ is differentiable when $H$ is $C^2$. The rest follows from (i).
\end{proof}

Analogous formulas for \emph{one-sided} directional derivatives are given in \cite{MR1868352}. There the derivatives of $\lambda_j$ are expressed in terms of a specific eigenvalue of certain matrices. For example, and in our notation, the first order derivative is given as a particular eigenvalue of the $d_j\times d_j$ symmetric matrix $Q^T\DD_e H Q$ where $Q = (\xi_1,\dots,\xi_{d_j})\in\mR^{m\times d_j}$ is an eigenvector matrix corresponding to $\lambda_j$. This interpretation is valid also for the formulas presented in Theorem \ref{thm:lambda_reg} since $P_j = QQ^T$, and by (B) part (i),
\[Q^T\DD_e H Q = Q^TP_j\DD_e H P_jQ = \DD_e\lambda_j\cdot Q^TP_jQ = \DD_e\lambda_j\cdot Q^TQ = \DD_e\lambda_j\cdot I_{d_j}\]
and $Q^T\DD_e H Q$ is just a scaling of the identity matrix.

\section{Asymptotic expansion of the eigenvalues}

We conclude the paper by inserting the various expressions for the first- and second order derivatives of the eigenvalues into the expansions
\begin{align*}
\lambda_j(x+he) &= \lambda_j(x) + h\DD_e\lambda_j(x) + \frac{1}{2}h^2\DD_e\DD_e\lambda_j(x) + o(h^2),\\
\lambda_j(x+y) &= \lambda_j(x) + \nabla\lambda_j(x)y + \frac{1}{2}y^T\mathcal{H}\lambda_j(x)y + o(\lvert y\rvert^2).
\end{align*}
Recall that if $x\mapsto H(x)\in S(m)$ has repeated eigenvalues and eigenprojections $\lambda_i(x)\in\mR$ and $P_i(x)\in Pr(m)$, $i = 1,\dots, m$, then the pseudoinverse of $\lambda_j(x) I-H(x)$ is given by
\[A_j(x) := \sum_{ \substack{ i=1\\ \lambda_i(x) \neq \lambda_j(x)} }^m \frac{P_i(x)/d_i(x)}{\lambda_j(x) - \lambda_i(x)},\qquad d_i(x) = \tr P_i(x).\]

\begin{corollary}[Second order directional expansion]
Let $H\colon\Omega\to S(m)$ be $C^2$ in a domain $\Omega\subseteq\mR^n$ and let $j\in\{1,\dots,m\}$.
If the eigenprojection $P_j$ has constant dimension $d_j=\tr P_j$, then for every $x\in\Omega$ and for every direction $e\in\mR^n$,
\begin{align*}
\lambda_j(x+he)
&= \xi^T\Big(\lambda_j I + h\DD_eH + \tfrac{1}{2}h^2\DD_e\DD_eH + h^2\DD_eHA_j\DD_eH\Big)\xi + o(h^2)\\
&= \lambda_j + h\xi^T\nabla_\xi H e + \frac{1}{2}h^2e^T\Big(\nabla_\xi(\nabla_\xi H)^T + 2(\nabla_\xi H)^TA_j\nabla_\xi H\Big)e + o(h^2)
\end{align*}
as $h\to 0$ for any $\xi\in P_j(x)(\mR^m)\cap\mathbb{S}^{m-1}$. Alternatively,
\[\lambda_j(x+he) = \frac{1}{d_j}\tr\left(P_j\big[\lambda_j I + h\DD_eH + \tfrac{1}{2}h^2\DD_e\DD_eH + h^2\DD_eHA_j\DD_eH\big]\right)  + o(h^2)\]
\end{corollary}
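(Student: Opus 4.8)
The plan is to assemble the corollary directly from the second‑order derivative formulas already established in Theorem~\ref{thm:lambda_reg}, part (C)(i), by plugging them into the Taylor expansion stated just above the corollary. The hypotheses ($H\in C^2$ and $P_j$ of constant dimension $d_j$) are exactly those of (C)(i), so we may invoke it without further ado. First I would recall the two expansions displayed at the start of Section~5, specializing the first one to the direction $e$ and using $\DD_e\DD_e\lambda_j$ in place of the second‑order term. Then I would substitute the formula
\[
\DD_e\DD_e\lambda_j(x) = \frac{1}{d_j}\tr\!\Big(P_j\big[\DD_e\DD_e H + 2\,\DD_e H\,A_j\,\DD_e H\big]\Big)
\]
from (C)(i) (with $a=b=e$), together with $\DD_e\lambda_j(x) = \frac{1}{d_j}\tr(P_j\DD_e H)$ from (B)(i) and the trivial identity $\lambda_j(x) = \frac{1}{d_j}\tr(P_j\cdot\lambda_j I)$. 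Collecting the three trace terms under one trace, with the coefficients $1$, $h$, $\tfrac12 h^2$ on the first, and noting that the $2\DD_eHA_j\DD_eH$ term already carries the factor $h^2$, yields precisely the "Alternatively" form
\[
\lambda_j(x+he) = \frac{1}{d_j}\tr\!\left(P_j\big[\lambda_j I + h\DD_eH + \tfrac12 h^2\DD_e\DD_eH + h^2\DD_eHA_j\DD_eH\big]\right) + o(h^2).
\]

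For the first two (vector/eigenvector) forms I would use the alternative expressions in (C)(i): for any unit $\xi\in P_j(x)(\mR^m)$ one has $\lambda_j = \xi^T\lambda_j I\,\xi$, $\DD_e\lambda_j = \xi^T\DD_e H\,\xi$, and $\DD_e\DD_e\lambda_j = \xi^T(\DD_e\DD_e H + 2\DD_e H A_j\DD_e H)\xi$. Substituting these into the directional Taylor expansion and factoring $\xi^T(\cdot)\xi$ out of the (finite) sum gives the first displayed line. The second displayed line is then obtained from the identities in (C)(i) relating the directional and Jacobian second‑order objects: $\xi^T\DD_e H\,\xi = \xi^T\nabla_\xi H\,e$ and $\xi^T(\DD_e\DD_e H + 2\DD_e H A_j \DD_e H)\xi = e^T(\nabla_\xi(\nabla_\xi H)^T + 2(\nabla_\xi H)^T A_j \nabla_\xi H)e$, which were already derived via \eqref{sym1}–\eqref{sym2} in the proof of (C)(i).

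I do not anticipate a genuine obstacle here; the corollary is essentially a bookkeeping consequence of Theorem~\ref{thm:lambda_reg}. The only point requiring a word of care is that the directional Taylor expansion with remainder $o(h^2)$ is legitimate — i.e.\ that $h\mapsto\lambda_j(x+he)$ is twice differentiable at $h=0$ with the stated first and second derivatives. This is guaranteed because (B)(i) gives the existence and formula for $\DD_e\lambda_j$ as a function on $\Omega$, and (C)(i) shows this function is itself directionally differentiable (with $H\in C^2$), so $\DD_e\DD_e\lambda_j(x)$ exists; standard one‑variable Taylor then applies to $g(h):=\lambda_j(x+he)$ along the line. With that justification in place, all three forms of the expansion follow by direct substitution, so the write‑up is short.
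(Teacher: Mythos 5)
Your proposal is correct and matches the paper's own treatment: the corollary is presented there without a separate proof, precisely as the result of inserting the formulas from Theorem \ref{thm:lambda_reg} (B)(i) and (C)(i) into the directional Taylor expansion displayed at the start of the section. Your extra remark justifying the $o(h^2)$ remainder via the Peano form of Taylor's theorem (first directional derivative existing along the whole segment by (B)(i), second existing at $x$ by (C)(i)) is a point the paper leaves implicit, and it is handled correctly.
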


The functions on the right-hand sides are all evaluated at $x$. Of course, the corresponding first order expressions are also valid if $H$ is $C^1$.

The assumption of constant dimension of $P_j$ is sufficient for directional expansion. But in order to get the total asymptotic behavior we also need to assume that $\lambda_j$ is differentiable or, equivalently, that $P_j$ is continuous.
We want to add one more condition to this list.

\begin{proposition}\label{prop:noncrossing}
	Assume that $H\colon\Omega\to S(m)$ is $C^k$ in $\Omega$ for some $k = 0,1,2,\dots$. If the number of distinct eigenvalues of $H$ is constant, then every eigenvalue and eigenprojection of $H$ is $C^k$ in $\Omega$. 
\end{proposition}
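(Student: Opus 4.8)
The plan is to work locally: fix $x_0\in\Omega$ and produce a neighbourhood $U$ of $x_0$ on which both $\lambda_j$ and $P_j$ are $C^k$; since being $C^k$ is a local property, this suffices (and connectedness of $\Omega$ is not needed). The crucial first step is to show that near $x_0$ the whole multiplicity pattern of $H$ is frozen. Let $\mu_1^0<\cdots<\mu_s^0$ be the distinct eigenvalues of $H(x_0)$, with multiplicities $n_1,\dots,n_s$ ($\sum_l n_l=m$). Using the quantitative continuity estimate $\sum_i\lvert\lambda_i(x)-\lambda_i(x_0)\rvert^2\leq\lVert H(x)-H(x_0)\rVert^2$ from the proof of part (A) of Theorem~\ref{thm:lambda_reg}, I would choose $U$ so small that every $\lambda_i(x)$ stays within a radius less than half the minimal gap between the $\mu_l^0$; then for $x\in U$ the spectrum of $H(x)$ splits into $s$ clusters confined to fixed pairwise disjoint intervals $I_1<\cdots<I_s$, with exactly $n_l$ eigenvalues (counted with multiplicity) in $I_l$. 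Each cluster is nonempty, so $H(x)$ has at least $s$ distinct eigenvalues, and the hypothesis $s(x)\equiv s$ then forces each cluster to be a single eigenvalue $\mu_l(x)\in I_l$ of multiplicity exactly $n_l$. The point of this is that the re-indexing $\alpha$ picking out the distinct eigenvalues can now be taken \emph{independent of $x$} on $U$, namely $\alpha(l)=n_1+\cdots+n_l$, so that $\lambda_{\alpha(l)}(x)=\mu_l(x)$ and $P_{\alpha(l)}(x)$ is the eigenprojection of the $l$-th cluster throughout $U$ — exactly the obstruction flagged after Theorem~\ref{thm:projectionderivative} ("the indexing will depend on $x$") disappearing.

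Next I would upgrade the continuity of the eigenvalues to $C^k$ regularity via the implicit function theorem applied to the characteristic polynomial. Put $p(x,t):=\det(tI-H(x))$, a monic polynomial in $t$ whose coefficients are polynomials in the entries of $H(x)$, hence $C^k$ in $x$; consequently $p$ is $C^k$ on $\Omega\times\mR$, and so is $g(x,t):=\partial_t^{\,n_l-1}p(x,t)$ for each fixed $l$. Since $\mu_l^0$ is a root of $p(x_0,\cdot)$ of multiplicity exactly $n_l$, we have $g(x_0,\mu_l^0)=0$ while $\partial_t g(x_0,\mu_l^0)=\partial_t^{\,n_l}p(x_0,\mu_l^0)=n_l!\prod_{l'\neq l}(\mu_l^0-\mu_{l'}^0)^{n_{l'}}\neq 0$, so the implicit function theorem yields a $C^k$ function $\phi$ near $x_0$ with $\phi(x_0)=\mu_l^0$ and $g(x,\phi(x))=0$, unique among solutions near $\mu_l^0$. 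On $U$ the multiplicity of $\mu_l(x)$ as a root of $p(x,\cdot)$ is constantly $n_l$, hence $g(x,\mu_l(x))=0$, and $\mu_l$ is continuous by part (A); therefore $\mu_l$ coincides with $\phi$ near $x_0$ and is $C^k$ there. Since each repeated eigenvalue $\lambda_j$ agrees on $U$ with the $\mu_l$ of its cluster, every $\lambda_j$ is $C^k$.

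Finally, for the eigenprojections I would invoke the Frobenius formula \eqref{eq:frobenius}. Applied pointwise on $U$ with the $x$-independent re-indexing $\alpha$ above, it gives, for the cluster index $l(j)$ containing $j$,
\[P_j(x)=P_{\alpha(l(j))}(x)=\prod_{\substack{l=1\\ l\neq l(j)}}^{s}\frac{H(x)-\mu_l(x)I}{\mu_{l(j)}(x)-\mu_l(x)},\qquad x\in U.\]
The right-hand side is a finite product of matrices with $C^k$ entries, divided by the scalars $\mu_{l(j)}(x)-\mu_l(x)$, which are $C^k$ and nowhere zero on $U$ because the clusters lie in disjoint intervals. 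Hence $P_j\in C^k(U)$, and since $x_0$ was arbitrary, $P_j\in C^k(\Omega)$.

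I expect the main obstacle to be the first step: correctly extracting, from the single global hypothesis that the \emph{number} of distinct eigenvalues is constant, the local statement that the full multiplicity vector $(n_1,\dots,n_s)$ is locally constant and that the clusters can be isolated by fixed disjoint intervals. Once that is secured, the $C^k$-regularity of the eigenvalues (implicit function theorem) and of the projections (Frobenius formula with a now $x$-independent indexing) is routine. For the degenerate case $k=0$ one simply omits the implicit function theorem step: the eigenvalues are continuous by part (A), and the Frobenius formula then delivers continuity of the $P_j$ directly.
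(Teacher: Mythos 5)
Your proof is correct, but it follows a genuinely different route from the paper's in both of its main steps. First, where you establish local constancy of the full multiplicity vector by a direct clustering argument (confining the spectrum to disjoint intervals via the perturbation estimate from the proof of (A) and then using $s(x)\equiv s$ to collapse each cluster to a single eigenvalue), the paper instead invokes Lemma~\ref{lem:semicont}: it proves the identity $s_m=\sum_{i=1}^m 1/d_i$ and plays upper against lower semicontinuity to conclude that every $d_j$ is constant, which yields the $x$-independent re-indexing globally rather than chart by chart. Second, for the $C^k$ regularity of the eigenvalues you apply the implicit function theorem to $g=\partial_t^{\,n_l-1}\det(tI-H(x))$, which delivers $\mu_l\in C^k$ in one shot; the paper instead bootstraps using its own machinery: continuity of $\lambda_j$ gives continuity of $P_j$ via the Frobenius formula \eqref{eq:frobeniusII}, Theorem~\ref{thm:lambda_reg}~(B) then gives $\lambda_j\in C^1$ with $\nabla\lambda_j(x)e=\frac{1}{d_j}\tr(P_j(x)\DD_e H(x))$, and the schematic relation $\nabla\oll=F(\oll,H,\DD H)$ is iterated to climb from $C^1$ to $C^k$. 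Both arguments are sound; yours is more self-contained and classical (it does not rely on Theorem~\ref{thm:lambda_reg}~(B) or the gradient formula at all, only on continuity of eigenvalues and the Frobenius formula), while the paper's version reuses the differentiation results it has already established and avoids introducing the characteristic polynomial and the implicit function theorem. The final step — $C^k$ regularity of $P_j$ from the Frobenius formula with a now $x$-independent indexing — is the same in both.
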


\begin{lemma}[Semicontinuity of some integer-valued functions associated to symmetric matrices]\label{lem:semicont}
Let $H\in C(\Omega,S(m))$.
\[ H(x) = \sum_{i=1}^m \frac{\lambda_i(x)}{d_i(x)}P_i(x),\qquad d_i(x) = \tr P_i(x).\]
For $j=1,\dots,m$, define the indexes
$j_*(x) := \min\{i\,\vert\,\lambda_i(x) = \lambda_j(x)\}$ and $j^*(x) := \max\{i\,\vert\,\lambda_i(x) = \lambda_j(x)\}$,
and let $s_j(x) := \lvert\{\lambda_1(x),\dots,\lambda_j(x)\}\rvert$ be the number of distinct eigenvalues less or equal to $\lambda_j(x)$. Then $j_*$ and $s_j$ are lower semicontinuous, and $j^*$ and $d_j$ are upper semicontinuous in $\Omega$. If $d_j$ is constant, then $j_*$ and $j^*$ are also constant. Furthermore, the number $s_m(x)$ of distinct eigenvalues of $ H$ at $x$ satisfies $s_m(x) = \sum_{i=1}^m 1/d_i(x)$, and if $s_m$ is constant, then so is every $d_j$.
\end{lemma}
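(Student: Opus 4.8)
The plan is to derive all six assertions from a single fact — the continuity of the ordered eigenvalues $\lambda_1\leq\cdots\leq\lambda_m$ in $x$ (part (A) of Theorem~\ref{thm:lambda_reg}) — together with the elementary observation that a \emph{strict} inequality $\lambda_i(x_0)<\lambda_{i+1}(x_0)$ persists on a neighbourhood of $x_0$.

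First I would establish the four semicontinuity claims by fixing $x_0\in\Omega$ and writing $p:=j_*(x_0)$, $q:=j^*(x_0)$. If $p>1$ then $\lambda_{p-1}(x_0)<\lambda_p(x_0)=\lambda_j(x_0)$, so the same strict inequality holds near $x_0$; since the eigenvalues are ordered, $\lambda_1(x),\dots,\lambda_{p-1}(x)$ then lie strictly below $\lambda_j(x)$ there, hence $j_*(x)\geq p$, and $j_*$ is lower semicontinuous (the case $p=1$ is trivial). Symmetrically, $\lambda_q(x_0)<\lambda_{q+1}(x_0)$ (when $q<m$) persists, so $\lambda_i(x)>\lambda_j(x)$ for all $i>q$ near $x_0$, giving $j^*(x)\leq q$ and upper semicontinuity of $j^*$. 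For $s_j$ I would count: among $\lambda_1(x_0)\leq\cdots\leq\lambda_j(x_0)$ there are exactly $s_j(x_0)-1$ indices $i<j$ with $\lambda_i(x_0)<\lambda_{i+1}(x_0)$; each such strict gap persists near $x_0$, so $\lambda_1(x),\dots,\lambda_j(x)$ still has at least $s_j(x_0)-1$ jumps and hence at least $s_j(x_0)$ distinct values, and $s_j$ is lower semicontinuous. Finally, since the set of indices with $\lambda_i=\lambda_j$ is a contiguous block, $d_j=j^*-j_*+1$, a sum of the two upper semicontinuous functions $j^*$ and $-(j_*-1)$, hence upper semicontinuous.

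Next I would handle the two constancy statements. If $d_j$ is constant then $j^*=j_*+(d_j-1)$ is a constant shift of the lower semicontinuous function $j_*$, hence lower semicontinuous; being also upper semicontinuous it is continuous, and an integer-valued continuous function on the connected domain $\Omega$ is constant, whence $j_*=j^*-d_j+1$ is constant too. The identity $s_m(x)=\sum_{i=1}^m 1/d_i(x)$ is pointwise combinatorics: the map $i\mapsto\lambda_i(x)$ partitions $\{1,\dots,m\}$ into $s_m(x)$ blocks, the block through $i$ having $d_i(x)$ elements, so the sum counts each block once. For the last claim, suppose $s_m$ is constant. From $d_i$ upper semicontinuous and integer-valued, $1/d_i$ is lower semicontinuous, and each $1/d_i$ takes values in the finite set $\{1,\tfrac12,\dots,\tfrac1m\}$. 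Fixing $x_0$ and any $x_n\to x_0$, I would pass to a subsequence with $1/d_i(x_n)\to c_i$ for every $i$; lower semicontinuity gives $c_i\geq 1/d_i(x_0)$, while $\sum_i c_i=\lim_n s_m(x_n)=s_m(x_0)=\sum_i 1/d_i(x_0)$ forces $c_i=1/d_i(x_0)$ for all $i$. Since the values lie in a finite set, this shows $1/d_i(x_n)\to 1/d_i(x_0)$ along the whole sequence, so each $d_i$ is continuous, integer-valued, hence constant on $\Omega$. (Alternatively: separate the eigenvalue clusters of $H(x_0)$ by disjoint intervals; for $x$ near $x_0$ each cluster contributes at least one eigenvalue of $H(x)$ inside its interval, and equality in $s_m(x)\geq s_m(x_0)$ forces every cluster to remain a single eigenvalue, i.e. every $d_j$ to be locally constant.)

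The proof is essentially bookkeeping around persistence of strict spectral gaps, so I do not expect a serious obstacle. The one step needing a genuine idea is the last — ruling out that a cluster "splits" at a point while the total count $s_m$ stays put — and there the finiteness of the possible values of the $1/d_i$ (equivalently, the interval-separation argument) is what makes the conclusion rigorous.
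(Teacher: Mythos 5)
Your proposal is correct and follows essentially the same route as the paper: continuity of the ordered eigenvalues makes strict spectral gaps persist, which gives the four semicontinuity claims, and the constancy statements follow because integer-valued functions that are both l.s.c.\ and u.s.c.\ on the connected domain $\Omega$ are constant. The only cosmetic difference is in the last step, where the paper notes directly that $1/d_j = s_m - \sum_{i\neq j}1/d_i$ is u.s.c.\ (being a constant minus a sum of l.s.c.\ terms), hence $d_j$ is continuous, whereas you reach the same conclusion by a subsequence argument using the finiteness of the value set.
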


\begin{proof}
We see that $j_*$ and $s_j$ decrease only if two different eigenvalues become equal. Since the eigenvalues of $ H$ are continuous (Theorem \ref{thm:lambda_reg} (A)), the superlevelsets $\{x\,\vert\, j_*(x)>c\}$ and $\{x\,\vert\, s_j(x)>c\}$ are open and the functions are therefore l.s.c.
The same reasoning applies when arguing that $j^*$ and $d_j$ are u.s.c.
There is however a more instructive proof of the upper semicontinuity of $d_j$. Since
\[0 = ( H(x) - \lambda_j(x)I)P_j(x) = ( H(x_0) - \lambda_j(x_0)I)P_j(x) + o(1)\]
as \(x\to x_0\), multiplying on the left by \(A_j(x_0)\) and rearranging gives $P_j(x) = P_j(x_0)P_j(x) + o(1)$.
Lemma \ref{lem:proj_stand} then implies that $d_j(x)\leq d_j(x_0) + o(1)$ and it follows that $\limsup_{x\to x_0}d_j(x) \leq d_j(x_0)$.
	
If $d_j$ is constant, then $j^* = j_* + d_j - 1$ is also l.s.c. It is therefore continuous and thus constant, which in turn makes $j_*$ constant.
	
Evaluate $s:=s_m = \lvert\{\lambda_1,\dots,\lambda_m\}\rvert$ at some fixed $x\in\Omega$. As in Section \ref{sec:prelim}, we choose a re-indexing $\alpha\colon\{1,\dots,s\}\to\{1,\dots,n\}$ so that $l\mapsto \lambda_{\alpha(l)}$ is a bijection.
Since
\[\sum_{\substack{ i=1\\ \lambda_i = \lambda_{\alpha(l)}} }^m\frac{1}{d_i} = \frac{1}{d_{\alpha(l)}}\sum_{\substack{ i=1\\ \lambda_i = \lambda_{\alpha(l)}} }^m 1 = \frac{1}{d_{\alpha(l)}}d_{\alpha(l)} = 1,\]
we get that
\[s = \sum_{l=1}^s 1 = \sum_{l=1}^s\sum_{\substack{ i=1\\ \lambda_i = \lambda_{\alpha(l)}} }^m\frac{1}{d_i} = \sum_{i=1}^m\frac{1}{d_i}.\]
	
Finally, since
each \(d_i\) is u.s.c., \(\frac{1}{d_i}\) is l.s.c., and \(-\frac{1}{d_i}\) is u.s.c. So if $s$ is constant, then
\[\frac{1}{d_j(x)} = s - \sum_{ \substack{i=1\\ i\neq j} }^m\frac{1}{d_i(x)}\]
is u.s.c. Thus \(d_j\) is also l.s.c. and therefore continuous and constant.
\end{proof}

\begin{proof}[Proof of Proposition \ref{prop:noncrossing}]
By the Lemma, every eigenprojection has constant dimension and we can therefore re-index the eigenvalues and eigenprojections independently of $x\in\Omega$. See \eqref{eq:X_unrep}. After renaming we can write
\[H(x) = \sum_{i=1}^s\lambda_i(x)P_i(x)\]
where
\[\lambda_1(x)<\cdots<\lambda_s(x),\qquad P_i(x)P_j(x) = \delta_{ij}P_i(x),\qquad \sum_{i=1}^s P_i(x) = I,\]
for all $x\in\Omega$.
Moreover, the Frobenius formula \eqref{eq:frobenius} becomes
\begin{equation}\label{eq:frobeniusII}
P_j(x) = \prod_{\substack{i=1\\ i\neq j}}^s\frac{H(x) - \lambda_i(x)I}{\lambda_j(x) - \lambda_i(x)}
\end{equation}
and shows that $P_j$ has (at least) the same regularity as $\oll := (\lambda_1,\dots,\lambda_s)^T$.
In particular, each $P_j$ and $\lambda_j$ is continuous whenever $H$ is continuous by Theorem \ref{thm:lambda_reg} (A).

Assume next that $H$ is $C^k$ for some $k\geq 1$. By the above, $P_j$ is continuous and $\lambda_j$ is $C^1$ (and therefore also $P_j$) by Theorem \ref{thm:lambda_reg} (B).

Since $\nabla\lambda_j(x)e = \frac{1}{d_j}\tr\left(P_j(x)\DD_eH(x)\right)$, the derivative of every $\lambda_j$ is a smooth function of $P_j$ -- which again is a smooth function of the eigenvalues and $H$ -- and the tensor $\DD H$.
In symbols,
\[\nabla\oll = F(\oll,H,\DD H),\]
and the result follows by induction.
\end{proof}

\begin{corollary}[Second order total expansion]
Let $H\colon\Omega\to S(m)$ be $C^2$ in a domain $\Omega\subseteq\mR^n$ and let $j\in\{1,\dots,m\}$.
Assume that \textbf{one} of the following conditions hold in $\Omega$.
\begin{enumerate}[(1)]
\item The eigenprojection $P_j$ is continuous.
\item $\lambda_j$ is differentiable and $d_j = \tr P_j$ is constant.
\item The number of distinct eigenvalues of $H$ is constant.
\end{enumerate}
Then, for every $x\in\Omega$,
\begin{align*}
\lambda_j(x+y)
&= \xi^T\Big(\lambda_j I + \DD_yH + \tfrac{1}{2}\DD_y\DD_yH + \DD_yHA_j\DD_yH\Big)\xi + o(\lvert y\rvert^2)\\
&= \lambda_j + \xi^T\nabla_\xi H y + \frac{1}{2}y^T\Big(\nabla_\xi(\nabla_\xi H)^T + 2(\nabla_\xi H)^TA_j\nabla_\xi H\Big)y + o(\lvert y\rvert^2)
\end{align*}
as $y\to 0$ for any $\xi\in P_j(x)(\mR^m)\cap\mathbb{S}^{m-1}$. Alternatively,
\[\lambda_j(x+y) = \frac{1}{d_j}\tr\left(P_j\big[\lambda_j I + \DD_yH + \tfrac{1}{2}\DD_y\DD_yH + \DD_yHA_j\DD_yH\big]\right)  + o(\lvert y\rvert^2)\]
\end{corollary}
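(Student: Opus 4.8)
The plan is to funnel all three hypotheses into the single condition "one of (a)--(d) of Proposition \ref{prop:equivalentprop} holds for $\lambda_j$", after which the differentiability of $\lambda_j$ and of $\nabla\lambda_j$ (with explicit gradient and Hessian) is already guaranteed by Theorem \ref{thm:lambda_reg}; the expansion then follows by writing the standard second order Taylor formula and rewriting each term using the algebraic identities from Section \ref{sec:prelim} and Theorem \ref{thm:lambda_reg}.

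First I would dispose of the three cases. Condition (1) is verbatim condition (d), and condition (2) is verbatim condition (b). For condition (3): by Lemma \ref{lem:semicont}, if the number $s_m$ of distinct eigenvalues of $H$ is constant in $\Omega$, then every $d_i=\tr P_i$ is constant; Proposition \ref{prop:noncrossing} (with $k=2$) then gives that each $P_j$ is $C^2$, in particular continuous, which is again condition (d). Thus under any of (1)--(3) the equivalent conditions (a)--(d) hold, so Theorem \ref{thm:lambda_reg}~(B)(ii) applies --- $\lambda_j$ is differentiable with $\nabla\lambda_j=\xi^T\nabla_\xi H$ and $\nabla\lambda_j(x)e=\tfrac{1}{d_j}\tr(P_j\DD_e H)$ --- and Theorem \ref{thm:lambda_reg}~(C)(ii) applies --- $\nabla\lambda_j$ is differentiable with $\mathcal{H}\lambda_j$ given by \eqref{eq:H3} and by \eqref{eq:lambdaHessiantensor}.

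Next I would invoke the elementary fact that, since $\nabla\lambda_j$ exists on all of $\Omega$ and is differentiable at $x$, integrating the first order expansion of $\nabla\lambda_j$ along the segment $[x,x+y]$ yields $\lambda_j(x+y)=\lambda_j(x)+\nabla\lambda_j(x)y+\tfrac12 y^T\mathcal{H}\lambda_j(x)y+o(|y|^2)$; the quadratic form is unambiguous because $\mathcal{H}\lambda_j(x)$ is symmetric, both summands in \eqref{eq:H3} being so. It then remains to recast the three terms. For the $\xi$-forms: $\lambda_j(x)=\xi^T(\lambda_j I)\xi$ since $|\xi|=1$; for the linear term, \eqref{sym1} gives $\nabla_\xi H(x)y=\DD_y H(x)\xi$, hence $\nabla\lambda_j(x)y=\xi^T\nabla_\xi H(x)y=\xi^T\DD_y H(x)\xi$; for the quadratic term, $y^T\nabla_\xi(\nabla_\xi H)^T(x)y=\xi^T\DD_y\DD_y H(x)\xi$ because $\nabla_\xi(\nabla_\xi H)^T$ is the Hessian of the scalar $C^2$ function $z\mapsto\xi^T H(z)\xi$, while $y^T(\nabla_\xi H(x))^TA_j(x)\nabla_\xi H(x)y=(\DD_y H(x)\xi)^TA_j(x)(\DD_y H(x)\xi)=\xi^T\DD_y H(x)A_j(x)\DD_y H(x)\xi$, again by \eqref{sym1} and the symmetry of $\DD_y H$. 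Collecting these gives the first displayed identity; the second is just \eqref{3.} and \eqref{eq:H3} substituted verbatim. For the trace form, use $\lambda_j(x)=\tfrac{1}{d_j}\tr(P_j\lambda_j I)$, formula \eqref{gradformula} for the linear term, and \eqref{eq:lambdaHessiantensor} with $a=b=y$ (giving $\tfrac12\DD_y\DD_y\lambda_j=\tfrac{1}{d_j}\tr(P_j[\tfrac12\DD_y\DD_y H+\DD_y HA_j\DD_y H])$) for the quadratic term.

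I do not expect a genuine obstacle: the content is entirely assembled from results already established. The two points deserving a sentence of care are the reduction of hypothesis (3) to the others, which relies on Lemma \ref{lem:semicont} together with Proposition \ref{prop:noncrossing}, and the passage from mere differentiability of $\nabla\lambda_j$ to the $o(|y|^2)$ remainder, which is routine calculus but does use that the hypotheses hold throughout $\Omega$ and not only at the base point $x$.
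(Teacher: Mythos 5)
Your proposal is correct and follows essentially the same route as the paper, which proves this corollary simply by inserting the gradient and Hessian formulas of Theorem \ref{thm:lambda_reg} (B)(ii) and (C)(ii) into the second order Taylor expansion, after reducing condition (3) to the equivalent conditions of Proposition \ref{prop:equivalentprop} via Lemma \ref{lem:semicont} and Proposition \ref{prop:noncrossing}. Your extra care about the Peano-remainder step (differentiability of $\nabla\lambda_j$ throughout $\Omega$, symmetry of $\mathcal{H}\lambda_j$) and the term-by-term rewritings via \eqref{sym1} are all sound.
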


Observe that if $m=2$, then constant dimension of an eigenprojection implies (3). Also when $n=1$, the expansions can be written as
\begin{align*}
\lambda_j(t+h)
	&= \frac{1}{d_j}\tr\left(P_j\big[\lambda_j I + hH' + \tfrac{1}{2}h^2H'' + h^2H'A_jH'\big]\right)  + o(h^2)\\
	&= \lambda_j + h\xi^TH'\xi + \frac{1}{2}h^2\xi^T\Big(H'' + 2H'A_jH'\Big)\xi  + o(t^2)
\end{align*}
and it is again enough to assume a constant $d_j$ since the directional and total derivatives are equivalent on the real line.

\paragraph{Acknowledgements:}
Supported by the Academy of Finland (grant SA13316965), and Aalto University. I thank Juha Kinnunen, Peter Lindqvist, and Fredrik Arbo H\o eg.

\bibliographystyle{alpha}
\bibliography{C:/LocalUserData/User-data/brustak1/references}

\begin{thebibliography}{HUY95}

\bibitem[ACL93]{MR1238911}
Alan~L. Andrew, K.-w.~Eric Chu, and Peter Lancaster.
\newblock Derivatives of eigenvalues and eigenvectors of matrix functions.
\newblock {\em SIAM J. Matrix Anal. Appl.}, 14(4):903--926, 1993.

\bibitem[Bel13]{MR3155251}
Giovanni Bellettini.
\newblock {\em Lecture notes on mean curvature flow, barriers and singular
  perturbations}, volume~12 of {\em Appunti. Scuola Normale Superiore di Pisa
  (Nuova Serie) [Lecture Notes. Scuola Normale Superiore di Pisa (New
  Series)]}.
\newblock Edizioni della Normale, Pisa, 2013.

\bibitem[Fan49]{MR0034519}
Ky~Fan.
\newblock On a theorem of {W}eyl concerning eigenvalues of linear
  transformations. {I}.
\newblock {\em Proc. Nat. Acad. Sci. U. S. A.}, 35:652--655, 1949.

\bibitem[HJ91]{MR1091716}
Roger~A. Horn and Charles~R. Johnson.
\newblock {\em Topics in matrix analysis}.
\newblock Cambridge University Press, Cambridge, 1991.

\bibitem[HJ13]{MR2978290}
Roger~A. Horn and Charles~R. Johnson.
\newblock {\em Matrix analysis}.
\newblock Cambridge University Press, Cambridge, second edition, 2013.

\bibitem[HUY95]{MR1320701}
J.-B. Hiriart-Urruty and D.~Ye.
\newblock Sensitivity analysis of all eigenvalues of a symmetric matrix.
\newblock {\em Numer. Math.}, 70(1):45--72, 1995.

\bibitem[Kat95]{MR1335452}
Tosio Kato.
\newblock {\em Perturbation theory for linear operators}.
\newblock Classics in Mathematics. Springer-Verlag, Berlin, 1995.
\newblock Reprint of the 1980 edition.

\bibitem[LS01]{MR1871318}
Adrian~S. Lewis and Hristo~S. Sendov.
\newblock Twice differentiable spectral functions.
\newblock {\em SIAM J. Matrix Anal. Appl.}, 23(2):368--386, 2001.

\bibitem[Tor01]{MR1868352}
Mounir Torki.
\newblock Second-order directional derivatives of all eigenvalues of a
  symmetric matrix.
\newblock {\em Nonlinear Anal.}, 46(8, Ser. A: Theory Methods):1133--1150,
  2001.

\bibitem[ZZX13]{MR3089437}
Liwei Zhang, Ning Zhang, and Xiantao Xiao.
\newblock On the second-order directional derivatives of singular values of
  matrices and symmetric matrix-valued functions.
\newblock {\em Set-Valued Var. Anal.}, 21(3):557--586, 2013.

\end{thebibliography}

\textsc{Karl K. Brustad
	\hfill\break\indent
	Department of Mathematics and System Analysis
	\hfill\break\indent
	Aalto University
	\hfill\break\indent FI-00076, Aalto, Finland
	\hfill\break\indent
	{\tt karl.brustad@aalto.fi}
}

\end{document}